\documentclass[11pt]{article}
\usepackage{amsmath,amsthm,amssymb,amscd}
\usepackage{graphicx}
\usepackage{graphics}
\usepackage{verbatim}

\usepackage{mathrsfs}
\usepackage{color}
\usepackage[top=1in, bottom=1in, left=1.25in, right=1.25in]{geometry}
\usepackage{enumerate}

\setcounter{secnumdepth}{2}

\newtheorem{theorem}{Theorem}[section]
\newtheorem{proposition}[theorem]{Proposition}

\newtheorem{lemma}[theorem]{Lemma}
\newtheorem{definition}[theorem]{Definition}
\newtheorem{corollary}[theorem]{Corollary}

\newenvironment{claim}[1]{\par\noindent\underline{Claim:}\space#1}{}

\usepackage{thmtools}
\usepackage{thm-restate}

\usepackage[normalem]{ulem}

\begin{document}

\title{Symmetric Set Coloring of Signed Graphs}

\author{Chiara Cappello\thanks{Member of the NRW-Forschungskolleg Gestaltung von flexiblen Arbeitswelten. chiara.cappello@upb.de}, Eckhard Steffen\thanks{
			 Paderborn University, Department of Mathematics, Warburger Str. 100, 33098 Paderborn,
Germany. es@upb.de}}
\date{}

\maketitle

\begin{abstract}
There are many concepts of signed graph coloring which are defined by
assigning colors to the vertices of the graphs.
These concepts usually differ in the number of self-inverse colors used.
We introduce a unifying concept for this kind of coloring 
by assigning elements from symmetric sets to the vertices of the signed graphs.
In the first part of the paper, we study colorings with elements from symmetric sets where the number of self-inverse elements is fixed. 
We prove a Brooks'-type theorem and upper bounds for the corresponding 
chromatic numbers in terms of the chromatic number of the underlying graph. 
These results are used in the second part where we introduce
the symset-chromatic number $\chi_{sym}(G,\sigma)$ of a signed graph
$(G,\sigma)$. We show that the symset-chromatic number gives the 
minimum partition of a signed 
graph into independent sets and non-bipartite antibalanced subgraphs.
In particular, $\chi_{sym}(G,\sigma) \leq \chi(G)$. 

In the final section
	we show that these colorings can also be formalized as $DP$-colorings. 
\end{abstract}

\section{Introduction} \label{Introduction}

A \emph{signed graph} $(G,\sigma)$ is a multigraph $G$ together with a function 
$\sigma : E(G) \rightarrow \{ \pm  \}$, where $\{ \pm \}$ is seen as a multiplicative group.
The function $\sigma$ is called a \emph{signature} of $G$ and $\sigma(e)$ is called the 
\emph{sign} of $e$. An edge $e$ is \emph{negative} if $\sigma(e) = -$ and
it is \emph{positive} otherwise. 
The set of negative edges is denoted by $E_{\sigma}^-$, and 
$E(G) - E_{\sigma}^-$ is the set of positive edges. 
A multigraph $G$ is sometimes called the \emph{underlying graph} of the signed graph $(G,\sigma)$.  

Let $(G',\sigma_{|_{E(G')}})$ be a subgraph of $(G,\sigma)$. 
The sign of $(G',\sigma_{|_{E(G')}})$ is the product of the signs
of its edges. 
A \emph{circuit} is a connected 2-regular graph. It is \emph{positive} if its sign is $+$ and \emph{negative} otherwise. 
A subgraph $(G',\sigma_{|_{E(G')}})$ is \emph{balanced}
 if all circuits in $(G',\sigma_{|_{E(G')}})$ are positive, otherwise it is \emph{unbalanced}. Furthermore, 
negative (positive) circuits are also often 
called unbalanced (balanced) circuits. 
If $\sigma(e)=+$ for all $e \in E(G)$, then $\sigma$ is the \emph{all-positive} signature and 
it is denoted by $\texttt{\bf +}$,
and if $\sigma(e)=-$ for all $e \in E(G)$, then $\sigma$ is the \emph{all-negative} signature and it is denoted by $\texttt{\bf -}$.

A \emph{switching} of a signed graph $(G,\sigma)$ at a set of vertices $X$ defines a signed graph $(G,\sigma')$ which is obtained from $(G,\sigma)$ by reversing the sign of each edge of the edge cut $\partial_G(X)$, 
where $\partial_G(X)$ denotes the set of edges having exactly one end in $X$, 
i.e.~$\sigma'(e) = - \sigma(e)$ if $e \in \partial_G(X)$ and
$\sigma'(e) = \sigma(e)$ otherwise.

 If $X= \{v\}$, then we also say that 
$(G,\sigma')$ is obtained from $(G,\sigma)$ by switching at $v$.
Switching defines an equivalence relation on the set of all signed graphs on $G$. 
We say that $(G,\sigma_1)$ and $(G,\sigma_2)$ are \emph{equivalent} if they can be obtained from each other by a switching at a vertex set $X$.  
We also say that $\sigma_1$ and $\sigma_2$ are equivalent signatures of $G$.
From Harary's \cite{Balance} characterization of balanced signed graphs it
follows that a signed graph $(G, \sigma)$ is balanced if and only if it is
equivalent to $(G,\texttt{\bf +})$. 

A signed graph $(G,\sigma)$ is \emph{antibalanced} if it is equivalent to 
$(G,\texttt{\bf -})$. 
The signed extension $\pm H$ of a graph $H$ 
	is obtained from $H$ by replacing
every edge by two edges, one positive and one negative.

\subsection{Motivation} 

We consider colorings of signed graphs
which are defined by assigning colors to its vertices.

Let $(G,\sigma)$ be a signed graph and $S$ be a set of colors. 
A function $c:V(G) \longrightarrow S$ is a \emph{coloring} of $(G,\sigma)$.
A coloring $c$ is \emph{proper} if 
$c(v) \not= \sigma(e) c(w)$ for each edge $e=vw$. If $(G,\sigma)$
admits a proper coloring with elements from $S$, we say that
$(G,\sigma)$ is $S$-colorable. 

While the coloring-condition for positive edges remains unchanged with respect to the unsigned case, the condition on a negative edge $e=vw$ requires that $c(v) \neq -c(w)$.
It implies that $-s \in S$ for each $s \in S$. 
At this point, the choice of the elements of $S$ has strong consequences on the colorings.
Indeed, two cases have to be distinguished: when $s$ is a non-self-inverse element, that is $s \neq -s$, and when $s$ is a self-inverse element, and so $s = -s$.

The objective of this paper is to define a coloring and a corresponding chromatic number for a signed graph $(G,\sigma)$ which gives a minimum color coded partition of the vertex set
and which does not depend on the number of self-inverse colors which are admitted for
coloring. To achieve this we will first discuss colorings where the number of self-inverse elements is fixed. This somewhat technical part is performed in Sect. \ref{Sec: t-chromatic},
where we determine the chromatic spectrum of signed graphs and prove a Brooks'-type theorem 
for these kinds of coloring. These results are used in Sect. \ref{Sec:symset chromatic},
where we show that the symset chromatic number (which will be defined later in this section)
describes the minimum partition of the signed graph into independent sets and 
antibalanced non-bipartite subgraphs. It follows that this parameter gives
a lower bound on the number of pairwise vertex-disjoint negative circuits of a signed graph.
We further give an 
upper bound for the symset-chromatic number  for a specific class of signed graphs. In the concluding section we show that circular coloring of signed graphs
\cite{Circular, NWZ_2020} is also covered by our approach and that all these colorings can also be formalized as $DP$-coloring. 

\subsection{Colorings and Basic Results}

If a vertex $v$ of $(G,\Sigma)$ is incident to a positive loop, then $(G,\sigma)$ does not have a proper coloring. 
If $v$ is incident to a negative loop, then it has to be colored with a non-self-inverse color,
which somehow counteracts our aforementioned objective. For these reasons, we will consider multigraphs without loops.

Next, we will introduce coloring of signed graphs which covers all approaches of signed graph
coloring which are defined by assigning colors to the vertices of the graph. The sets
of colors will be symmetric sets.

\begin{definition}
	A set $S$ together with a sign ``$-$'' is a \emph{symmetric set} 
	if it satisfies the following conditions:	
	
	1. $s \in S$ if and only if $-s \in S$.
	
	2. If $s=s'$, then $-s=-s'$. 
	
	3. $s = -(-s)$.
\end{definition}

An element $s$ of a symmetric set $S$ is \emph{self-inverse} if $s=-s$. 
A symmetric set with self-inverse elements $0_1, \dots, 0_t$
and non-self-inverse elements $\pm s_1, \dots, \pm s_k$ is
denoted by $S_{2k}^t$. Clearly, $|S_{2k}^t|=t+2k$.

A further natural requirement on signed graph coloring
is that equivalent signed graphs should have the same
coloring properties. 
Let $(G,\sigma')$ be obtained from $(G,\sigma)$ by switching
at a vertex $u$.
If $(G,\sigma)$ admits a proper coloring $c$ with elements from 
$S_{2k}^t$, then $c'$ with $c'(u) = -c(u)$ and $c'(v)=c(v)$ for
$v \not=u$ is a proper coloring of $(G,\sigma')$ with elements 
from $S_{2k}^t$.

\begin{proposition} \label{equivalent}
	Let $(G,\sigma)$ and $(G,\sigma')$ be equivalent signed graphs. Then $(G,\sigma)$ admits a proper 
	$S_{2k}^t$-coloring if and only if $(G,\sigma')$ admits a proper 
	$S_{2k}^t$-coloring.
\end{proposition}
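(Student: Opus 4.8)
The plan is to reduce the statement about switching at an arbitrary vertex set $X$ to the single-vertex case already recorded in the observation immediately preceding the proposition. The key combinatorial fact I would establish first is that switching at a set $X = \{u_1, \dots, u_m\}$ coincides with the composition of the $m$ single-vertex switchings at $u_1, \dots, u_m$, performed in any order. To see this, fix an edge $e = vw$ and note that performing all $m$ single-vertex switchings multiplies $\sigma(e)$ by $(-1)^{|\{v,w\} \cap X|}$: an edge with both ends in $X$ is flipped twice and hence unchanged, an edge with exactly one end in $X$ is flipped once, and an edge with no end in $X$ is untouched. This is precisely the rule defining the switching at the set $X$, since $e \in \partial_G(X)$ if and only if exactly one end of $e$ lies in $X$.

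With this decomposition in hand, I would prove the forward implication by induction on $|X|$. The base case $|X| \le 1$ is trivial or is exactly the observation. For the inductive step, suppose $(G,\sigma)$ admits a proper $S_{2k}^t$-coloring $c$. Applying the single-vertex result to $u_1$ yields a proper coloring of the signed graph obtained after switching at $u_1$; iterating along $u_2, \dots, u_m$ produces a proper $S_{2k}^t$-coloring of $(G,\sigma')$. Tracking the color transformation through the iteration, each vertex in $X$ has its color negated exactly once and every other vertex is left unchanged, so the resulting coloring $c'$ satisfies $c'(v) = -c(v)$ for $v \in X$ and $c'(v) = c(v)$ otherwise. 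This $c'$ takes values in $S$ because condition~1 in the definition of a symmetric set guarantees $-s \in S$ whenever $s \in S$, so negation is a well-defined map $S \to S$.

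Finally, for the converse implication I would invoke that switching is an involution: switching at $X$ twice restores $\sigma$, so $(G,\sigma)$ is likewise obtained from $(G,\sigma')$ by switching at $X$. Hence the same argument, with the roles of $\sigma$ and $\sigma'$ interchanged, shows that an $S_{2k}^t$-coloring of $(G,\sigma')$ yields one of $(G,\sigma)$. I do not anticipate a genuine obstacle here; the only point requiring care is the parity bookkeeping in the decomposition step, namely confirming that composing the single-vertex switches reproduces exactly the set-switching on $\partial_G(X)$ and that the net effect on the coloring is negation precisely on the vertices of $X$.
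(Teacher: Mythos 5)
Your proposal is correct and follows essentially the same route as the paper: the paper records the single-vertex switching observation (negate the color at the switched vertex) immediately before the proposition and leaves the extension to switching at an arbitrary set implicit, which is exactly the decomposition-and-iteration argument you spell out. Your explicit parity bookkeeping for composing single-vertex switchings and the involution argument for the converse are the details the paper omits, and they are sound.
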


Schweser and Stiebitz \cite{choosable} used the term symmetric
set for subsets $Z \subseteq \mathbb{Z}$ with the property that
$Z = -Z$, where $-Z = \{-z : z \in Z\}$. In case of finite sets
this gives symmetric sets with $t$ self-inverse elements for $t \in \{0,1\}$.  Examples for symmetric 
sets with more than one self-inverse element are subsets $Z'$
of $\mathbb{Z}_{2n}^k$, with $Z'=-Z'$. Here the vectors whose 
entries are either $0$ or $n$ are self-inverse. 

Self-inverse elements someway annul the effect of the sign, so the following proposition naturally holds.

\begin{proposition} \label{proper_coloring_self_inverse}
	Every signed graph $(G,\sigma)$ has a proper $S_0^{\chi(G)}$-coloring, where $\chi(G)$ denotes the chromatic number of the underlying graph $G$. 
\end{proposition}

The main problem in coloring with symmetric sets is that their
cardinality and the number self-inverse elements have the same parity. 
This has some surprising consequences as it can be that the
set of colors has more elements than the vertex set of the graph.
This issue has been addressed in several ways (see e.g.~\cite{SV_21}).

Zaslavsky \cite{Signedgraphcoloring, Colorful} first considered two different sets for coloring signed graphs, $M_{2k}= \{\pm 1, \dots, \pm k\}$ and 
$M_{2k+1}= \{0, \pm 1, \dots, \pm k\}$. 
He worked on the chromatic polynomial by distinguishing the two cases, the 0-free coloring
with elements from $M_{2k}$ and the coloring with elements from $M_{2k+1}$, i.e.~coloring
where $0$ is allowed. 

Based on this coloring, M\'a\v{c}ajov\'a, Raspaud, and \v{S}koviera \cite{ModIntro} introduced the signed chromatic number $\chi_{\pm}(G,\sigma)$
to be the smallest integer $n$ for which $(G,\sigma)$ admits a proper 
$M_{n}$-coloring.

Kang and Steffen \cite{Circular} introduced cyclic coloring of signed graphs and they used cyclic groups $\mathbb{Z}_n$ as the set of colors. The cyclic chromatic number, denoted by $\chi_{mod}(G,\sigma)$, is the smallest integer $n$ such that $(G,\sigma)$ admits a proper coloring with elements of $\mathbb{Z}_n$.

Interestingly, these approaches may not only provide different chromatic numbers for the same signed graphs, but they even have different general bounds.
On one side an antibalanced triangle is 
colorable with $M_2$ by assigning $1$ to all its vertices, but
it is not $\mathbb{Z}_2$-colorable.
On the other side, the signed extension of the complete graph on 4 vertices, $\pm K_4$, has a $\mathbb{Z}_6$-coloring but no $M_6$-coloring. 
Note that here, in both types of coloring
the set of colors contains more elements than the vertex set of the graph.
The reason for this is given by the different number of self-inverse elements allowed: indeed, in the coloring defined by Zaslavsky we can have either 0 or 1 self-inverse element, while cyclic coloring uses 1 or 2 self-inverse elements. 

Let $(G,\sigma)$ be a signed graph and $t \in \{0, \dots \chi(G)\}$ be fixed. 
The \emph{symset $t$-chromatic number} (or $t$-chromatic number for short) 
of $(G,\sigma)$ is the minimum $\lambda_t=t+2k$ for which
$(G,\sigma)$ admits an $S_{2k}^t$-coloring, and it is denoted by $\chi_{sym}^t(G,\sigma)$. 
By Proposition \ref{equivalent}, if $(G,\sigma)$ and 
$(G,\sigma')$ are equivalent, then 
$\chi_{sym}^t(G,\sigma)= \chi_{sym}^t(G,\sigma')$. 
If a graph has symset $t$-chromatic number $\chi_{sym}^t(G,\sigma)=\lambda_t$, we say that $(G, \sigma)$ is \emph{ $\lambda_t$-chromatic}. 

Clearly, Zaslavsky's coloring is equivalent to coloring with elements
from $S^0_{2k}$ or $S^1_{2k}$ and cyclic coloring is equivalent to
coloring with elements from $S^1_{2k}$ and $S^2_{2k}$. 
Consequently, the chromatic
numbers studied in \cite{Circular, ModIntro} can be defined as the
minimum between two specific $t$-chromatic numbers of signed graphs.  

\begin{proposition} \label{first_approaches}
	If $(G,\sigma)$ is a signed graph, then 
	$\chi_{\pm}(G,\sigma) = \min\{\chi_{sym}^0(G,\sigma), \chi_{sym}^1(G,\sigma)\}$ and 
	$\chi_{mod}(G,\sigma) = \min \{\chi_{sym}^1(G,\sigma), \chi_{sym}^2(G,\sigma)\}$. 
\end{proposition}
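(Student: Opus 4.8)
The plan is to exploit the bijections recalled just before the statement as \emph{sign-preserving} isomorphisms of symmetric sets: each of them sends an element $x$ to an element $\phi(x)$ with $\phi(-x) = -\phi(x)$, so a coloring $c$ is proper with respect to one set if and only if $\phi \circ c$ is proper with respect to the other. Consequently, colorability with $M_n$ (respectively with $\mathbb{Z}_n$) translates directly into $S_{2k}^t$-colorability, and the whole argument reduces to splitting the admissible values of $n$ according to their parity and comparing the two resulting minima.

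For the signed chromatic number, I would first record that via the stated bijection $M_{2k}$ is identified with $S_{2k}^0$ and $M_{2k+1}$ with $S_{2k}^1$. Hence $(G,\sigma)$ is $M_n$-colorable for an even $n = 2k$ exactly when it is $S_{2k}^0$-colorable, and for an odd $n = 2k+1$ exactly when it is $S_{2k}^1$-colorable. Taking the minimum over the even values gives $\min\{2k : (G,\sigma)\text{ is }S_{2k}^0\text{-colorable}\} = \chi_{sym}^0(G,\sigma)$ by definition, and over the odd values gives $\chi_{sym}^1(G,\sigma)$. Since $\chi_{\pm}(G,\sigma)$ is the minimum over all $n$, and these two parity classes exhaust the admissible $n$, it equals $\min\{\chi_{sym}^0(G,\sigma), \chi_{sym}^1(G,\sigma)\}$.

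The cyclic case is identical in spirit but the parities swap. Using the identification $\mathbb{Z}_{2n+1} = S_{2n}^1$ I would handle the odd moduli: $\mathbb{Z}_{2n+1}$-colorability is $S_{2n}^1$-colorability, whose minimal odd modulus $2n+1 = 1 + 2n$ is $\chi_{sym}^1(G,\sigma)$. Using $\mathbb{Z}_{2n} = S_{2(n-1)}^2$ I would handle the even moduli: $\mathbb{Z}_{2n}$-colorability is $S_{2(n-1)}^2$-colorability, and here the modulus $2n$ equals the color count $2 + 2(n-1)$, so the minimal even modulus is $\chi_{sym}^2(G,\sigma)$. Taking the overall minimum yields $\chi_{mod}(G,\sigma) = \min\{\chi_{sym}^1(G,\sigma), \chi_{sym}^2(G,\sigma)\}$.

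The only points requiring care, rather than a genuine obstacle, are twofold. First, one must check that the four bijections really commute with negation, so that the proper-coloring condition $c(v) \neq \sigma(e) c(w)$ is preserved in both directions; this is where the self-inverse versus non-self-inverse structure of each set enters, namely that $0$ (and, for $\mathbb{Z}_{2n}$, also $n$) are exactly the fixed points of negation, matching the self-inverse colors of the target $S_{2k}^t$. Second, I would note the within-$t$ monotonicity: if $(G,\sigma)$ is $S_{2k}^t$-colorable then it is $S_{2k'}^t$-colorable for every $k' \geq k$, since $S_{2k}^t \subseteq S_{2k'}^t$; this guarantees that the minimum taken over a single parity class is attained and equals the corresponding $\chi_{sym}^t(G,\sigma)$, which is what makes the parity-split argument valid.
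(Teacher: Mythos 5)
Your proposal is correct and follows essentially the same route as the paper, which states the proposition without a separate proof precisely because it is immediate from the negation-compatible bijections $M_{2k} \leftrightarrow S_{2k}^0$, $M_{2k+1} \leftrightarrow S_{2k}^1$, $\mathbb{Z}_{2n+1} \leftrightarrow S_{2n}^1$ and $\mathbb{Z}_{2n} \leftrightarrow S_{2(n-1)}^2$ recalled just before it. Your write-up simply makes explicit the two points the paper leaves implicit, namely that these bijections commute with negation (so properness transfers) and that splitting the admissible values of $n$ by parity turns each class minimum into the corresponding $\chi_{sym}^t(G,\sigma)$.
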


Thus, for a signed graph $(G, \sigma)$, $\chi_{\pm}(G, \sigma)$ and $\chi_{mod} (G, \sigma)$ depend on the number of self-inverse colors which can be 
used for coloring.
In general, fixing the number of self-inverse elements (instead of choosing from two different cases) causes some issues due to parity. For instance, 
the all-positive complete graph on $2n$ vertices has a proper $S_{2j}^{2(n-j)}$-coloring
for each $j \in \{0,\dots,n\}$, 
and in particular $\chi ^{2j}_{sym}(K_{2n}, \texttt{\bf +}) = 2n$.
However, for each $j \in \{0,\dots,n\}$ it also holds $\chi ^{2j+1}_{sym}(K_{2n}, \texttt{\bf +}) = 2n+1$.

\subsection*{The Symset-Chromatic Number}

Since $S_0^{\chi(G)} \subseteq S_{2k}^t$ for all $t \geq \chi(G)$,
it follows by Proposition \ref{proper_coloring_self_inverse} that every signed graph $(G,\sigma)$ has a proper $S_{2k}^t$-coloring for all $t \geq \chi(G)$. For this reason, we 
assume $t \leq \chi(G)$ in the following.

If an antibalanced subgraph of $(G,\sigma)$
which is induced by a non-self-inverse color is bipartite, then 
the non-self-inverse color can be replaced by two self-inverse colors.
In that case, $(G,\sigma)$ has an $S^t_{2k}$- and an $S^{t+2}_{2(k-1)}$-coloring.

Let $N=\min \{\chi_{sym}^t(G,\sigma) \colon 0 \leq t \leq \chi(G)\}$.
The above examples show that $N$ is not necessarily associated with a
unique symmetric set $S$ for which $(G,\sigma)$ admits a minimum proper $S$-coloring. 
To overcome this problem we define 
$\max_{t} \min \{\chi_{sym}^t(G,\sigma) \colon 0 \leq t \leq \chi(G)\}$
to be the \emph{symset chromatic number} of $(G,\sigma)$, which is denoted by $\chi_{sym}(G,\sigma)$.
Furthermore, we say that an $S_{2k}^t$-coloring is minimum if $\chi_{sym}(G, \sigma)= t+2k$.
By Proposition \ref{equivalent} it follows that equivalent signed graphs have the same symset chromatic number.

\begin{proposition}\label{sym_upper_bound}
	For every signed graph $(G,\sigma)$ it holds $\chi_{sym}(G,\sigma) \leq \chi(G)$. 
	Furthermore, if	$(G,\sigma)$ and $(G,\sigma')$ are equivalent, then 
	$\chi_{sym}(G,\sigma) = \chi_{sym}(G,\sigma')$. In particular, if 
	$(G,\sigma)$ is equivalent to $(G,\texttt{\bf +})$, then 
	$\chi_{sym}(G,\sigma) = \chi(G)$. 
\end{proposition}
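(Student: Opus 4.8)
The plan is to treat the three assertions separately, in increasing order of difficulty. Unwinding the definition, $\chi_{sym}(G,\sigma)$ is the minimum value $N=\min\{\chi_{sym}^t(G,\sigma)\colon 0\le t\le\chi(G)\}$, so the upper bound and the switching-invariance are short consequences of results already at hand, and the substance lies in the equality $\chi_{sym}(G,\texttt{\bf 1})=\chi(G)$. Moreover, once the switching-invariance is established, the final ``in particular'' clause requires no separate work: if $(G,\sigma)$ is equivalent to $(G,\texttt{\bf 1})$ then $\chi_{sym}(G,\sigma)=\chi_{sym}(G,\texttt{\bf 1})$, and it remains only to evaluate the right-hand side.

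For the upper bound I would specialize to $t=\chi(G)$. Proposition \ref{proper_coloring_self_inverse} furnishes a proper $S_0^{\chi(G)}$-coloring of $(G,\sigma)$, so $\chi_{sym}^{\chi(G)}(G,\sigma)\le\chi(G)$, and taking the minimum over $t$ yields $\chi_{sym}(G,\sigma)\le\chi(G)$. For switching-invariance I would invoke Proposition \ref{equivalent}: equivalent signed graphs satisfy $\chi_{sym}^t(G,\sigma)=\chi_{sym}^t(G,\sigma')$ for every fixed $t$, so the two families of $t$-chromatic numbers coincide and hence so do their minima.

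The heart of the argument is the lower bound $\chi_{sym}(G,\texttt{\bf 1})\ge\chi(G)$, and I would first isolate the following characterization: an induced subgraph $(H,\texttt{\bf 1})$ of $(G,\texttt{\bf 1})$ is antibalanced if and only if $H$ is bipartite. Indeed, switching preserves the sign of every circuit, so $(H,\texttt{\bf 1})$ is equivalent to $(H,\texttt{\bf -1})$ precisely when the two signatures assign each circuit the same sign; under \texttt{\bf 1} every circuit is positive, while under \texttt{\bf -1} a circuit is positive exactly when its length is even, and the two agree on all circuits iff $H$ has no odd circuit. Granting this, fix any $t$ and a proper $S_{2k}^t$-coloring $c$ with $t+2k=\chi_{sym}^t(G,\texttt{\bf 1})$. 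By Proposition \ref{structure} the classes of $c$ partition $V(G)$ into $t$ independent sets, one per self-inverse color, and $k$ induced antibalanced subgraphs, one per non-self-inverse pair; by the characterization each of the latter is bipartite. Assigning one fresh color to each independent class and two fresh colors to each bipartite class gives a proper coloring of $G$ with at most $t+2k$ colors, so $\chi(G)\le t+2k=\chi_{sym}^t(G,\texttt{\bf 1})$. Since this holds for every $t$, it holds for the minimizing one, giving $\chi(G)\le\chi_{sym}(G,\texttt{\bf 1})$; with the upper bound this forces equality.

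I expect the only delicate point to be the antibalanced-versus-bipartite equivalence, as it is exactly what makes each non-self-inverse class $2$-colorable and thereby ties the symmetric-set data back to $\chi(G)$; the rest is bookkeeping about counting colors across classes. It is also worth noting that, since $\chi_{sym}$ is a minimum over $t$, the upper bound needs only a single favorable value of $t$, whereas the lower bound must be established uniformly over all $t$, which is why the characterization is applied to an arbitrary coloring rather than to a specially chosen one.
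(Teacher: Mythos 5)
Your proof is correct. Note that the paper offers no explicit proof of this proposition: it is presented as an immediate consequence of the preceding material, with the invariance claim attributed to Proposition \ref{equivalent} in the sentence directly before the statement, and the upper bound following from Proposition \ref{proper_coloring_self_inverse} exactly as you argue (take $t=\chi(G)$, $k=0$, and minimize over $t$). The only place where genuine work is needed is the lower bound $\chi_{sym}(G,\texttt{\bf 1})\geq\chi(G)$, and there your route is heavier than necessary: you pass through Proposition \ref{structure} and the characterization that an antibalanced induced subgraph of an all-positive signed graph is bipartite. This is valid; be aware, though, that only the direction ``antibalanced $\Rightarrow$ bipartite'' is actually used, and that is the direction which follows from the invariance of circuit signs under switching, whereas the ``precisely when'' in your justification also asserts the converse (same circuit signs $\Rightarrow$ switching equivalent), which needs a separate argument (here, switching at one side of the bipartition) -- harmless, since you never use it. The argument the paper presumably has in mind is shorter: in $(G,\texttt{\bf 1})$ every edge is positive, so the propriety condition $c(v)\neq\sigma(e)c(w)$ reads $c(v)\neq c(w)$ on every edge, hence a proper $S_{2k}^t$-coloring of $(G,\texttt{\bf 1})$ is literally a proper coloring of $G$ with at most $t+2k$ colors, giving $\chi_{sym}^t(G,\texttt{\bf 1})\geq\chi(G)$ for every $t$ in one line. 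Your detour does buy something: it is the same structural pattern (color classes as independent sets plus antibalanced, hence analyzable, pieces) that the paper exploits later, e.g.\ in Theorem \ref{structure_sign}, but for this proposition the direct reading of the coloring condition suffices.
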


In preparation for the study of
the symset-chromatic number in Sect. \ref{Sec:symset chromatic}
we study the symset-$t$-chromatic number in the next section.

\section{The Symset $t$-Chromatic Number} \label{Sec: t-chromatic}

An $S^t_{2k}$-coloring of $(G,\sigma)$ provides some information on the structure
of $(G,\sigma)$.

\begin{proposition} \label{structure}
	If a signed graph $(G,\sigma)$ admits a proper $S_{2k}^t$-coloring $c$, 
	then $c$ induces a partition of $V(G)$
	such that $c^{-1}(0)$ is an independent set in $G$ for every self-inverse
	color $0$, and 
	$(G[c^{-1}(\pm s)], \sigma_{|_{G[c^{-1}(\pm s)]}})$ is an antibalanced subgraph of $(G,\sigma)$ for every non-self-inverse color $\pm s$.  
\end{proposition}

We first prove upper bounds for the symset $t$-chromatic number in terms 
of the chromatic number of the underlying graph. The cases $t \leq 1$ and $t=2$
had been proved in \cite{ModIntro} and \cite{kang2018coloring}, respectively.

\begin{theorem} \label{upperbound_chi(G)}
Let $G$ be a graph with chromatic number $k$. Then for every $t \in \{0, \dots, k\}$: $\chi_{sym}^t(G,\sigma) \leq 2k-t$. Furthermore, $\chi_{sym}^t(\pm G) = 2k-t$ and there are 
simple signed graphs $(H,\sigma_H)$ such that $\chi(H)=k$ and $\chi_{sym}^t(H,\sigma_H)=2k-t$. 
\end{theorem}

\begin{proof}
	Let $c$ be a $k$-coloring of $G$ with colors from 
	$\{0_1,\dots 0_t, s_{t+1}, \dots, s_k\}$.
	This coloring is a coloring of $\pm G$ with colors from  
	$\{0_1,\dots 0_t, \pm s_{t+1}, \dots, \pm s_k\}$. Hence, $\chi_{sym}^t(\pm G) \leq 2k-t$.
	
	If $t=k$, then $\chi_{sym}^t(\pm G) = 2k-t$, since $\chi(G)=k$. 
	
	Let $t < k$ and suppose to the contrary, that $\chi_{sym}^t(\pm G) < 2k-t$. Then, there is a coloring
	with elements $\{0_1,\dots 0_t, \pm s_{t+1}, \dots, \pm s_{l}\}$ and $l < k$. 
	If necessary by switching there is a $2l-t$ coloring of $(G,\sigma)$ which
	only uses colors $\{0_1,\dots 0_t, s_{t+1}, \dots, s_l\}$. This is also an $l$-coloring of $G$, a contradiction. Hence, $\chi_{sym}^t(\pm G) = 2k-t$
	and $\chi_{sym}^t(G,\sigma) \leq 2k-t$, since $(G,\sigma)$ is a subgraph of $\pm G$.

Let $G_k$ be the Turan graph on $k(k-t+1)$ vertices which is the complete
$k$-partite graph with $k$ independent sets of cardinality $k-t+1$.
Thus, $G_k$ contains $k-t+1$ pairwise disjoint copies $H_1, \dots,H_{k-t+1}$
of $K_k$. Let $\sigma$ be a signature on $G_k$ with 
$E_{\sigma}^- = \bigcup_{i=2}^{k-t+1}E(H_i)$. Clearly, $\chi(G_k)=k$ and
therefore, $\chi_{sym}^t(G_k,\sigma) \leq 2k-t$. 

If $t=k$, then $\chi_{sym}^t(G_k,\sigma) = \chi(G_k) = k (=2k-k)$. 
Let $t \in \{0, \dots , k-1\}$ and suppose to the contrary that 
$\chi_{sym}^t(G_k,\sigma) < 2k-t$, say $(G,\sigma)$ is colored with
colors from $\{0_1, \dots, 0_t, \pm s_{t+1} \dots \pm s_{l}\}$ with $l < k$. 

Then, at least $k-t$ vertices of $H_1$ are colored
with pairwise different colors from $\{\pm s_{t+1} \dots \pm s_{l}\}$. Furthermore,
for each $i \in \{2, \dots, k-t\}$ each all-negative copy $H_i$ of $K_k$
contains at least two vertices of the same color of $\{\pm s_{t+1} \dots \pm s_{l}\}$. 
Since for all $2 \leq i < j \leq k-t+1$ each vertex of $H_i$
is connected by a positive edge to every vertex of $H_j$, it follows
that for all $i \not = j$, the multiple used colors in $H_i$ are different from the  multiple used colors in $H_j$. 
Thus, at least $2(k-t)$ pairwise different non-self-inverse colors are needed;
$k-t$ for the 
all-negative copies of $K_k$ and $k-t$ for the coloring of $H_1$. But we have only
$2(l-t)$ non-self-inverse colors and $l<k$, a contradiction. Thus, $\chi_{sym}^t(G_k,\sigma) = 2k-t$.   
\end{proof}

\subsection{Brooks' Type Theorem for the Symset $t$-Chromatic Number}\label{BrooksTheorem}

We are going to prove a Brooks' type theorem for the symset $t$-chromatic number,
which implies Brooks' Theorem for unsigned graphs.  
 
The symset $t$-chromatic number has the same parity as $t$. 
Observe that, if $t = \chi(G) - l$, then
Theorem \ref{upperbound_chi(G)} can be reformulated as
$\chi^t_{sym}(G,\sigma) \leq \chi(G)  +l$. By parity we obtain equality in the following statement.  

\begin{proposition} \label{trivialbound}
	Let $(G,\sigma)$ be a signed graph. If $t=\chi(G)-1$, then $\chi^t _{sym} (G, \sigma)= \chi (G) +1$. 
\end{proposition}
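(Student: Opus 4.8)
The plan is to use Theorem~\ref{upperbound_chi(G)} to get the upper bound and then exploit the parity observation to force equality. First I would set $t = \chi(G) - 1$, which is the case $l = 1$ in the reformulation noted just above the statement. Theorem~\ref{upperbound_chi(G)} gives $\chi^t_{sym}(G,\sigma) \leq 2\chi(G) - t = 2\chi(G) - (\chi(G)-1) = \chi(G) + 1$, so the upper bound $\chi^t_{sym}(G,\sigma) \leq \chi(G)+1$ is immediate.

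For the lower bound, the key is the parity remark preceding the statement: the symset $t$-chromatic number $\lambda_t = t + 2k$ always has the same parity as $t$, since it differs from $t$ by the even number $2k$. Now $\chi(G) - 1 = t$ has the same parity as $t$, whereas $\chi(G)$ has the opposite parity from $t$. So a value of $\chi^t_{sym}(G,\sigma)$ equal to $\chi(G)$ is ruled out by parity. Hence it suffices to show $\chi^t_{sym}(G,\sigma) \geq \chi(G)$, and then the only admissible value in the interval $[\chi(G), \chi(G)+1]$ consistent with the parity of $t$ is $\chi(G)+1$.

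To establish $\chi^t_{sym}(G,\sigma) \geq \chi(G)$, I would argue that any proper $S^t_{2k}$-coloring $c$ of $(G,\sigma)$ yields a proper (ordinary) coloring of the underlying graph $G$ using at most $t + k$ distinct ``color classes,'' so $t + k \geq \chi(G)$, i.e.\ $\lambda_t = t + 2k \geq \chi(G) + k \geq \chi(G)$. Concretely, by Proposition~\ref{structure} each self-inverse color $0_i$ induces an independent set in $G$ (giving $t$ independent color classes), and each non-self-inverse color pair $\pm s_j$ induces an antibalanced subgraph. The point is that the $t + 2k$ symmetric-set colors partition $V(G)$ into at most $t + k$ groups in the sense that the antibalanced pieces can each be $2$-colored; more directly, since $\lambda_t \geq t$ and $\lambda_t$ has the parity of $t$, if $\lambda_t = t$ (i.e.\ $k=0$) then $c$ uses only self-inverse colors, so by Proposition~\ref{structure} all color classes are independent and $c$ is a proper $t$-coloring of $G$, forcing $t \geq \chi(G)$, contradicting $t = \chi(G)-1$. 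Thus $k \geq 1$ and $\lambda_t = t + 2k \geq (\chi(G)-1) + 2 = \chi(G)+1$, which already gives the lower bound directly without even separating the parity argument.

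I expect the main obstacle to be deciding which of these two routes to present cleanly: the purely parity-based argument (showing only $\chi^t_{sym} \geq \chi(G)$ and invoking parity to jump to $\chi(G)+1$) is slick but requires the clean lower bound $\chi^t_{sym}(G,\sigma) \geq \chi(G)$, whereas the direct argument via $k \geq 1$ sidesteps parity but needs the observation that $k = 0$ would make $c$ an ordinary $t$-coloring of $G$. The cleanest writeup combines them: note $\chi^t_{sym}(G,\sigma) \leq \chi(G)+1$ from Theorem~\ref{upperbound_chi(G)}, observe that $\lambda_t \equiv t \pmod 2$ excludes the value $\chi(G)$, and rule out any value below $\chi(G)$ by the fact that $t = \chi(G) - 1 < \chi(G)$ means a coloring with $\lambda_t < \chi(G)$ would in particular give fewer than $\chi(G)$ color classes contradicting $\chi(G)$ being the chromatic number. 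The care needed is in the bookkeeping that every proper $S^t_{2k}$-coloring refines to a proper coloring of $G$ with at most $\lambda_t$ classes, so $\lambda_t \geq \chi(G)$ is forced, and parity then pins down equality at $\chi(G)+1$.
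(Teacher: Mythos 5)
Your ``more directly'' argument is correct, and it is essentially the paper's own proof: Theorem~\ref{upperbound_chi(G)} gives $\chi^t_{sym}(G,\sigma)\le 2\chi(G)-t=\chi(G)+1$; since $\chi^t_{sym}(G,\sigma)=t+2k$ for some $k\ge 0$, the case $k=0$ would mean a proper coloring using only the $t$ self-inverse colors, which (the sign condition degenerating to $c(v)\ne c(w)$, cf.\ Proposition~\ref{structure}) is an ordinary proper $t$-coloring of $G$ --- impossible for $t=\chi(G)-1<\chi(G)$; hence $k\ge 1$ and $\chi^t_{sym}(G,\sigma)\ge t+2=\chi(G)+1$. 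This chain is exactly what the paper compresses into ``by parity we obtain equality,'' and if you wrote up only this, you would be done.

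The problem is the rest of the proposal, and in particular the route you ultimately recommend as ``the cleanest writeup,'' which rests on a false lemma. It is not true that a proper $S^t_{2k}$-coloring ``yields a proper (ordinary) coloring of the underlying graph $G$ using at most $t+k$ distinct color classes,'' nor that it ``refines to a proper coloring of $G$ with at most $\lambda_t$ classes,'' nor that ``the antibalanced pieces can each be $2$-colored.'' A non-self-inverse color class induces an antibalanced subgraph, which need not be an independent set and need not even be bipartite: the all-negative triangle is antibalanced. Concretely, color every vertex of $(K_4,\texttt{\bf -1})$ with the single color $s$; since $c(v)=s\ne -s=\sigma(e)c(w)$ for every (negative) edge $e=vw$, this is a proper $S^0_2$-coloring, so $\chi^0_{sym}(K_4,\texttt{\bf -1})=2$ although $\chi(K_4)=4$ and $t+k=1$. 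The same example shows that the standalone lower bound $\chi^t_{sym}(G,\sigma)\ge\chi(G)$, on which both your ``slick'' parity route and your final summary rely, is simply not available in general; under the hypothesis $t=\chi(G)-1$ it does hold, but only as a consequence of the $k\ge 1$ argument above --- which in fact yields $\ge\chi(G)+1$ directly, making the parity exclusion of the value $\chi(G)$ superfluous. So keep the $k=0$ contradiction and delete the class-merging and refinement claims: as recommended in your last paragraph, the written-up proof would be wrong, even though a correct proof is embedded in your text.
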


If $G$ is a graph with $\chi(G)=\Delta(G) = t+1$, then 
$\chi^t_{sym}(G,\sigma) = \Delta(G) + 1$ by Proposition \ref{trivialbound}. 
The following Brooks' type statement is the main result of this section.

\begin{theorem}\label{Brooks'adapt}
	Let $G$ be a connected graph and $t \in \{0,\dots,\chi(G)\}$.	\\	
 If $\Delta(G) - t$ is odd, then $\chi^t_{sym}(G,\sigma) \leq \Delta(G) + 1$. \\
 If $\Delta(G) - t$ is even, then  
		$\chi^t_{sym}(G,\sigma) = \Delta(G) + 2$ or $\chi^t_{sym}(G,\sigma) \leq \Delta(G)$. Furthermore, $\chi^t_{sym}(G,\sigma) = \Delta(G) + 2$ if and only if
		\begin{itemize}
			\item $G$ is a complete graph and $t = \chi(G)-1 (= \Delta(G))$ or 
			\item $(G,\sigma)$ is a balanced complete graph or 
			\item $(G,\sigma)$ is a balanced odd circuit or
			\item $(G,\sigma)$ is an  unbalanced even circuit and $t=0$ or
			\item $(G,\sigma)$ is an unbalanced odd circuit and $t=2$. 
		\end{itemize}  	
\end{theorem}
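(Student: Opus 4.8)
The plan is to work throughout with the partition description of colorings: by Proposition~\ref{structure} together with its (routine) converse, a proper $S_{2k}^t$-coloring is the same thing as a partition of $V(G)$ into at most $t$ independent sets and $k$ sets each inducing an antibalanced subgraph, so that $\chi^t_{sym}(G,\sigma)$ is the minimum of $t+2k$ over all such partitions. The basic quantitative input is that in a proper coloring every neighbor of a vertex forbids \emph{at most one} element of the symmetric set: a neighbor colored by a self-inverse color forbids that one element irrespective of the edge sign, and a neighbor colored by $\pm s_j$ forbids exactly one of $s_j,-s_j$. Hence greedy coloring with any palette of size at least $\deg(v)+1$ at each step succeeds. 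Taking the smallest admissible palette of size $\ge\Delta$ of the correct parity --- namely $\Delta$ when $\Delta-t$ is even and $\Delta+1$ when $\Delta-t$ is odd --- gives both stated upper bounds. Since $\chi^t_{sym}(G,\sigma)\equiv t\pmod 2$, in the even case no value strictly between $\Delta$ and $\Delta+2$ is admissible, which already yields the dichotomy $\chi^t_{sym}(G,\sigma)=\Delta+2$ or $\chi^t_{sym}(G,\sigma)\le\Delta$.

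It remains to identify when $\Delta+2$ is forced. First I would reduce to the regular case: if $G$ is connected but not $\Delta$-regular, then ordering the vertices toward a vertex of degree $<\Delta$ makes greedy succeed with $\Delta$ colors, so $\chi^t_{sym}(G,\sigma)\le\Delta$; thus any extremal graph is $\Delta$-regular. The cases $\Delta\le 2$ are then settled directly. For circuits ($\Delta=2$) the key facts are that a circuit can be $2$-colored by a single non-self-inverse pair exactly when it is antibalanced (equivalently, its sign equals $(-1)^{\text{length}}$), and that $t=2$ forces an ordinary $2$-coloring; matching these against parity produces precisely the four circuit entries of the list (balanced odd circuits for $t\in\{0,2\}$, unbalanced even circuits for $t=0$, unbalanced odd circuits for $t=2$).

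For $\Delta$-regular complete graphs $G=K_{\Delta+1}$ (which also absorbs $K_1,K_2$) I would argue through the maximum size of an antibalanced induced subgraph. In a balanced complete graph an all-positive clique on three or more vertices is not antibalanced, so every part covers at most two vertices and $t+2k\ge\Delta+1$; by parity this forces $\Delta+2$. In an unbalanced complete graph there is a negative triangle, an antibalanced $3$-set, so one pair may cover three vertices while the remaining vertices are paired along single edges (each an antibalanced $2$-set); this realizes $t+2k=\Delta$, the sole exception being $t=\chi(G)-1=\Delta$, where $k=0$ forbids the triangle and Proposition~\ref{trivialbound} forces $\Delta+2$. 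These are exactly the first two items of the list.

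The core is the remaining case $\Delta\ge 3$ with $G$ $\Delta$-regular and not complete, where I expect $\chi^t_{sym}(G,\sigma)\le\Delta$ always. If $G$ has a cut vertex, each block is non-regular at that vertex, so I would color the blocks with $\le\Delta$ colors and glue them greedily across the cut vertices. If $G$ is $2$-connected, I would invoke the structural lemma behind Lov\'asz's proof of Brooks' theorem to find nonadjacent vertices $v_1,v_2$ with a common neighbor $u$ such that $G-v_1-v_2$ is connected, color $v_1$ and $v_2$ first so that they forbid the \emph{same} element at $u$ (a common self-inverse color when $t\ge1$, otherwise $c(v_1)=s_1$ and $c(v_2)=\sigma(uv_1)\sigma(uv_2)s_1$ inside one pair), and then color $G-v_1-v_2$ greedily in an order ending at $u$; the engineered coincidence at $u$ frees exactly the one element needed to finish with $\Delta$ colors. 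I expect this last step to be the main obstacle: the color-saving device must be arranged for every admissible $t$, and, more importantly, the argument must confirm that the extremal graphs are exactly those listed, since the crucial ``$+1$'' separating $\Delta$ from $\Delta+2$ arises from genuinely different sources --- a negative triangle for complete graphs, an antibalanced circuit for cycles, and the $v_1,v_2$ coincidence in the generic case --- each of which has to be reconciled with the fixed parity of $t$.
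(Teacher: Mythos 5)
Your proposal is correct and follows essentially the same route as the paper's proof: reduction of the problem via a greedy argument for non-regular connected graphs (the paper's Lemma \ref{NotRegularBrooks}), separate treatment of complete graphs via an antibalanced triangle and of circuits via parity/antibalance (Propositions \ref{CompleteBrooksEven} and \ref{CycleBrooks}), and, in the generic $\Delta(G)\geq 3$ regular non-complete case, a cut-vertex gluing argument together with Lov\'asz's distance-2 lemma (Lemma \ref{tool}), where two non-adjacent vertices are colored so as to forbid the same element at their common neighbor before finishing greedily. The only cosmetic difference is that you engineer the colors directly (choosing $c(v_2)=\sigma(uv_1)\sigma(uv_2)c(v_1)$, or a shared self-inverse color when $t\geq 1$) where the paper first switches so that both edges at the common neighbor are positive and then assigns equal colors.
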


We will prove the statement  
by formulating some propositions, some of which might be of own interest. 

\begin{proposition}\label{CompleteBrooksEven}
Let $K_n$ be the complete graph on $n \geq 3$ vertices and let 
$t \in \{0, \dots ,n\}$. \\
If $\Delta(K_n) -t$ is odd, then $\chi_{sym}^t (K_n,\sigma) \leq \Delta(K_n) +1$.\\
If $\Delta(K_n) -t$ is even, then $\chi^t _{sym} (K_n, \sigma)=\Delta(K_n) +2$ or
 $\chi^t _{sym}(K_n, \sigma) \leq \Delta(K_n)$. Furthermore, 
 $\chi^t _{sym} (K_n, \sigma)=\Delta(K_n) +2$
if and only if $(K_n,\sigma)$ is equivalent to $(K_n,\textbf{+})$ or $t = n-1$.
\end{proposition}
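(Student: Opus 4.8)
The plan is to reduce the computation of $\chi_{sym}^t(K_n,\sigma)$ to a partition problem and then to control the part sizes through triangles. First I would record the reduction. By Proposition~\ref{structure}, in any proper $S_{2k}^t$-coloring of $(K_n,\sigma)$ each self-inverse color class is an independent set, hence at most one vertex, while each non-self-inverse pair $\pm s_i$ spans an antibalanced set. Conversely, any partition of $V(K_n)$ into at most $t$ singletons and some antibalanced sets is realized by a proper coloring: give the singletons distinct self-inverse colors and give each antibalanced set a distinct pair $\pm s_i$, split along the Harary partition of Theorem~\ref{character_balanced} for its (balanced) signature $-\sigma$. No edge $e=vw$ between two distinct classes is then in conflict, since $\sigma(e)c(w)$ lies in the orbit $\{c(w),-c(w)\}$, which is disjoint from the orbit $\{c(v),-c(v)\}$. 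Hence $\chi_{sym}^t(K_n,\sigma)=t+2k^\ast$, where $k^\ast$ is the fewest antibalanced sets that, together with at most $t$ singletons, partition $V(K_n)$; in particular $\chi_{sym}^t\equiv t\pmod 2$, and since we are free to choose which $t$ vertices are singletons, everything reduces to the maximum size of an antibalanced induced subgraph of $(K_n,\sigma)$.

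The key structural step is a dichotomy governed by triangles. Recall that a signed complete graph is balanced if and only if all its triangles are positive; applying this to $-\tau$ shows that an induced subgraph $(K_m,\tau)$ is antibalanced if and only if each of its triangles is negative. Consequently, if $(K_n,\sigma)$ is balanced I would switch to the all-positive signature (legitimate by Proposition~\ref{equivalent}); then every triangle is positive, no three vertices span an antibalanced set, and the maximum antibalanced set has size $2$. If $(K_n,\sigma)$ is unbalanced, it contains a negative triangle, which is itself an antibalanced set of size $3$. This size-$2$-versus-size-$\ge 3$ dichotomy is the engine of the argument, and checking it carefully, together with the automatic properness of cross-class edges above, is where I expect the main care to be needed.

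Given the reduction the upper bounds are short constructions. If $\Delta-t$ is odd then $n-t$ is even, so I would take $t$ singletons and split the remaining vertices into $(n-t)/2$ antibalanced pairs, giving $\chi_{sym}^t\le t+(n-t)=\Delta+1$; this uses only $2$-element sets and so holds for every $\sigma$. If $\Delta-t$ is even and $(K_n,\sigma)$ is unbalanced with $t\le n-3$, then $n-t$ is odd, and I would use one negative triangle as a single antibalanced class and split the remaining $n-t-3$ (even number of) vertices into pairs, for a total cost $t+(n-t-1)=\Delta$.

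It remains to pin down the even case. By the parity remark $\chi_{sym}^t\equiv t\equiv n+1\pmod 2$ there, so the value $\Delta+1$ is impossible and $\chi_{sym}^t$ is either $\Delta+2$ or at most $\Delta$; this already gives the dichotomy claimed in the statement. For the ``if'' direction: when $t=n-1$, Proposition~\ref{trivialbound} (with $\chi(K_n)=n$) yields $\chi_{sym}^t=\Delta+2$ at once; when $(K_n,\sigma)$ is balanced the maximum antibalanced set has size $2$, so covering the at least $n-t$ non-singleton vertices needs $k\ge\lceil (n-t)/2\rceil=(n-t+1)/2$, whence $\chi_{sym}^t\ge t+(n-t+1)=\Delta+2$, which the pairing construction attains. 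For the ``only if'' direction I would invoke the unbalanced construction of the previous paragraph: if $(K_n,\sigma)$ is unbalanced and $t\le n-3$ then $\chi_{sym}^t\le\Delta$. Since in the even case the admissible values of $t$ are exactly $n-1,n-3,\dots$, the only ones producing $\Delta+2$ are $t=n-1$ and the balanced signatures, which is precisely the asserted characterization.
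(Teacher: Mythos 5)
Your proposal is correct and follows essentially the same route as the paper's own proof: a negative (hence antibalanced) triangle plus pairs handles the unbalanced case with $t \leq n-3$, the balanced case forces pairwise distinct colors (equivalently, color classes of size at most $2$) and hence $\Delta(K_n)+2$ colors by parity, and $t=n-1$ is settled by Proposition \ref{trivialbound}; your explicit reduction to a partition into singletons and antibalanced sets is just a formalization of what the paper does implicitly. One small caveat: the parity remark alone does not ``already give the dichotomy'' as you parenthetically claim (it rules out $\Delta(K_n)+1$ but not values $\geq \Delta(K_n)+4$); the dichotomy instead follows from your subsequent exhaustive case analysis (balanced or $t=n-1$ gives exactly $\Delta(K_n)+2$, unbalanced with $t\leq n-3$ gives at most $\Delta(K_n)$), so the proof as a whole is complete.
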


\begin{proof}
If $t=n$ or $t=n-2$, then $\chi_{sym}^t (K_n,\sigma) = \chi(K_n) = n = \Delta(K_n)+1$.
If $t = n-1$, then by Proposition \ref{upperbound_chi(G)}, 
$\chi_{sym}^t (K_n,\sigma) = \chi(K_n)+1 = n+1 = \Delta(K_n)+2$.

Let $t \leq n-3$ and $\chi_{sym}^t (K_n,\sigma) = t + 2k$. Hence, $k \geq 1$. 
First we consider the case when $(K_n,\sigma)$ is not balanced. 
Then it contains an induced antibalanced 
circuit $C_3$ of length 3, which can be colored with one pair of non-self-inverse colors.
Thus, $(K_n-V(C_3),\sigma_{|_{K_n-V(C_3)}})$ can be colored with at most $n-3$ pairwise different colors. Taking the parity into account it follows that 
if $\Delta(K_n) -t$ is even, then 
$\chi^t _{sym} (K_n, \sigma) \leq n-1 = \Delta(K_n)$, and 
if $\Delta(K_n) -t$ is odd, then $\chi^t _{sym} (K_n, \sigma) \leq n = \Delta(K_n) + 1$. 

If $\sigma$ is equivalent to $\textbf{+}$, then any coloring of $(K_n,\textbf{+})$
needs $n$ pairwise different colors. Thus, $\Delta(K_n)-t$ is even if and only if 
$\chi^t_{sym}(K_n, \textbf{+}) = n+1 = \Delta(K_n)+2$. 
\end{proof} 

\begin{proposition}\label{CycleBrooks}
	For each circuit $C_n$ on $n$ vertices: 	
	\begin{itemize}
		\item If $t \in \{1,3\}$, then $\chi^t_{sym} (C_n, \sigma)=3$.
		\item If $t \in \{0,2\}$, then $\chi^t_{sym}(C_n,\sigma) \in \{2,4\}$, and
		$\chi^t_{sym}(C_n,\sigma) = 4$ if and only if 
		\begin{itemize}
			\item $(C_n,\sigma)$ is a balanced odd circuit or
			\item $(C_n,\sigma)$ is an unbalanced even circuit and $t=0$ or
			\item $(C_n,\sigma)$ is an unbalanced odd circuit and  $t=2$. 		
		\end{itemize} 
 	\end{itemize}	
\end{proposition}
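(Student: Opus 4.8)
The plan is to treat the two parity regimes of $t$ separately, using throughout the observation that $\chi^t_{sym}$ has the same parity as $t$ and that, by Proposition \ref{equivalent}, $\chi^t_{sym}(C_n,\sigma)$ depends only on $n$ and on whether $(C_n,\sigma)$ is balanced or unbalanced. To avoid circularity I would establish all upper bounds by explicit colorings rather than invoking Theorem \ref{Brooks'adapt}, since the present proposition is meant to feed into its proof. A recurring tool is a \emph{propagation} coloring of a path: fixing a non-self-inverse color on one endpoint and setting $c(v_{i+1})=-c(v_i)$ across positive edges and $c(v_{i+1})=c(v_i)$ across negative edges yields a proper coloring of any path with a single pair $\{s,-s\}$.

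For $t\in\{1,3\}$ I would show $\chi^t_{sym}(C_n,\sigma)=3$. The lower bound is forced by parity once the value $1$ is excluded: a single color (self-inverse or not) cannot properly color an edge, so the smallest admissible odd value is $3$; for $t=3$ the bound $\chi^t_{sym}\geq 3$ is immediate. For the upper bound when $t=3$ one uses three self-inverse colors, which reduces to an ordinary proper $3$-coloring of $C_n$. When $t=1$ one colors a single vertex $v_1$ with the self-inverse color $0_1$ and propagates the pair $\{s_1,-s_1\}$ along the remaining path $v_2\cdots v_n$; the two edges incident to $v_1$ are automatically proper because a self-inverse color is never equal to $\pm s_1$, so this is a proper $S_2^1$-coloring.

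The substantive case is $t\in\{0,2\}$, where parity gives $\chi^t_{sym}\in\{2,4,\dots\}$ and an explicit $4$-coloring caps the value at $4$: color all but one vertex by propagating one pair $\{s_1,-s_1\}$ and give the last vertex either the second self-inverse color (if $t=2$) or a color from a second pair (if $t=0$); the two edges at the last vertex are proper since its color lies outside $\{s_1,-s_1\}$. It then remains to decide exactly when the value is $2$. For $t=2$ the only symmetric set of size $2$ is $S_0^2$, so a $2$-coloring is an ordinary proper $2$-coloring of the underlying $C_n$, which exists iff $n$ is even. For $t=0$ the only symmetric set of size $2$ is $S_2^0$; here a proper coloring assigns $\pm s_1$ to every vertex, so by Proposition \ref{structure} the circuit, being a single non-self-inverse color class, must be antibalanced, and conversely the propagation coloring closes up consistently around the cycle precisely when the number of positive edges is even. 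I expect this equivalence --- $S_2^0$-colorability $\iff$ antibalancedness $\iff$ even number of positive edges --- to be the main obstacle, since it requires tracking a parity condition around a closed cycle rather than along a path.

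Finally I would convert ``antibalanced'' into the stated balance/parity conditions. Since the sign of $C_n$ equals $(-1)^{\#\text{negative edges}}$, an even number of positive edges is equivalent to the number of negative edges having the same parity as $n$, i.e.\ to $(C_n,\sigma)$ being balanced when $n$ is even and unbalanced when $n$ is odd. Combining: for $t=0$ the value is $4$ exactly on unbalanced even circuits and on balanced odd circuits, while for $t=2$ the value is $4$ exactly when $n$ is odd, i.e.\ on all odd circuits. Reading these off against the three listed cases yields the claimed characterization; the only care needed is to keep the $n$-parity versus balanced/unbalanced correspondence straight, which is precisely the subtlety flagged above.
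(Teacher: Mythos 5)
Your proposal is correct and takes essentially the same route as the paper's proof: a case analysis on $t$ exploiting that $\chi^t_{sym}$ has the parity of $t$, explicit colorings for the upper bounds, and the observation that a two-element symmetric set means ordinary $2$-colorability when $t=2$ and antibalancedness (even number of positive edges) when $t=0$. The paper is simply terser---it invokes Proposition~\ref{trivialbound} for $t=2$ and dismisses $t\in\{0,1\}$ as ``easy to check,'' so your write-up supplies exactly the omitted details; your concern about circularity is unfounded, though, since Proposition~\ref{trivialbound} rests only on Theorem~\ref{upperbound_chi(G)} and not on Theorem~\ref{Brooks'adapt}.
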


\begin{proof} Since we assume that $t \leq \chi(C_n)$, it follows that 
	$t \in \{0,1,2,3\}$, where $t= 3$ only applies if $n$ is odd. In this case, we have $\chi^3_{sym}(C_n,\sigma) = \chi(C_n) = 3$. Furthermore, $\chi^1_{sym}(C_n,\sigma)=3$
	is easy to check. The statements for $t=2$ follow with
	Proposition \ref{trivialbound}.	It is easy to see that 
	$\chi^0_{sym}(C_n,\sigma) \leq 4$ and $\chi^0_{sym}(C_n,\sigma) = 2$
	if and only if $n$ is even and $C_n$ is balanced or $n$ is odd and $C_n$ is unbalanced. 
\end{proof}

The following statement is a standard lemma for coloring.
\begin{lemma} \label{order}
The vertices of a connected graph $G$ can be ordered in a sequence $x_1, x_2,..., x_n$ so that $x_n$ is any preassigned vertex of $G$ and for each $i < n$ the vertex $x_i$ has a neighbor among $x_{i+1},...,x_n$.
\end{lemma}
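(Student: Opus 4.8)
The plan is to construct the required sequence from the back, placing the preassigned vertex last and repeatedly prepending a vertex that is adjacent to the part already built. This makes the defining property hold by construction, and connectivity enters in exactly one place.

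Concretely, set $x_n = r$, the preassigned vertex, and let $R_n = \{x_n\}$. Proceeding downward, suppose $x_{i+1}, \dots, x_n$ have been defined for some $i \ge 1$, and write $R_{i+1} = \{x_{i+1}, \dots, x_n\}$. Since $i \ge 1$, the set $R_{i+1}$ is a nonempty proper subset of $V(G)$. Because $G$ is connected, there must exist at least one edge of $G$ with one end in $R_{i+1}$ and one end in $V(G) \setminus R_{i+1}$: otherwise $\partial_G(R_{i+1}) = \emptyset$, and then no path could join $R_{i+1}$ to its complement, contradicting the connectivity of $G$. Choose $x_i$ to be an end in $V(G) \setminus R_{i+1}$ of such an edge; then $x_i$ has a neighbor in $R_{i+1} = \{x_{i+1}, \dots, x_n\}$, as required.

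Iterating this step for $i = n-1, n-2, \dots, 1$ assigns distinct labels to all $n$ vertices of $G$ (each chosen $x_i$ lies outside $R_{i+1}$), and the construction guarantees for every $i < n$ that $x_i$ has a neighbor among $x_{i+1}, \dots, x_n$, while $x_n = r$ remains the preassigned vertex. There is no real obstacle here: the only substantive point is the existence of a crossing edge at each stage, which is precisely the definition of connectivity. An equivalent route would be to take a spanning tree of $G$ rooted at $r$ and order the vertices by non-increasing distance to $r$, so that each non-root vertex precedes its parent; the parent then supplies the required later neighbor.
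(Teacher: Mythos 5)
Your proof is correct, and it is the standard argument for this classical fact. Note that the paper itself gives no proof at all: it simply states the lemma as ``a standard lemma for coloring'' (it is the well-known ordering lemma used in greedy-coloring proofs of Brooks' theorem). Your backward construction --- peeling off, at each stage, a vertex outside the already-built suffix that is joined to it by a crossing edge, whose existence is exactly connectivity --- is the canonical way to prove it, and your alternative via a spanning tree rooted at the preassigned vertex, ordering vertices by non-increasing distance to the root, is the other standard phrasing of the same idea. There is nothing to object to; if anything, your write-up supplies a justification the paper omits.
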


\begin{lemma}\label{NotRegularBrooks}
Let $(G, \sigma)$ be a simple connected signed graph. If $G$ is not regular, then \\ 
$\chi_{sym}^t (G, \sigma) \leq \begin{cases} \Delta(G) + 1, & \mbox{if }\Delta(G) -t \mbox{ is odd} \\ \Delta(G), & \mbox{if } \Delta(G) -t \mbox{ is even}.
\end{cases}$
\end{lemma}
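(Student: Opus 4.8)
The plan is to reduce the statement to the classical greedy argument behind Brooks' theorem, the only genuinely new ingredient being a count of how many colours a single already-coloured neighbour can forbid. First I would fix the palette so that its cardinality matches the parity of $\Delta(G)-t$. Writing $|S_{2k}^t|=t+2k$, in the even case I set $2k=\Delta(G)-t$, obtaining a symmetric set of size exactly $\Delta(G)$, and in the odd case I set $2k=\Delta(G)-t+1$, obtaining a set of size $\Delta(G)+1$. Since $G$ is connected and not regular, the (classical) bound $\chi(G)\le\Delta(G)$ together with the standing assumption $t\le\chi(G)$ gives $t\le\Delta(G)$, so $k\ge 0$ and these palettes are well defined; moreover $\chi_{sym}^t$ is by definition the minimum of $t+2k$, so exhibiting one such palette that admits a proper colouring yields the claimed bound.

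The central observation is that when a vertex $v$ is joined through an edge $e=vw$ to an already coloured vertex $w$, the properness condition $c(v)\neq\sigma(e)c(w)$ forbids $v$ from taking the single value $\sigma(e)c(w)$. Because $S_{2k}^t$ is symmetric we have $\sigma(e)c(w)\in\{c(w),-c(w)\}\subseteq S_{2k}^t$, and this remains true when $c(w)$ is self-inverse, where the forbidden value is simply $c(w)$ itself, independently of $\sigma(e)$. Hence every coloured neighbour rules out at most one colour of the palette, exactly as in the unsigned setting.

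With this in hand I would invoke Lemma \ref{order}, choosing as the preassigned last vertex $x_n$ a vertex of degree strictly less than $\Delta(G)$, which exists precisely because $G$ is not regular. Colouring $x_1,\dots,x_n$ greedily in this order, each $x_i$ with $i<n$ has at least one neighbour among $x_{i+1},\dots,x_n$, so at most $\deg(x_i)-1\le\Delta(G)-1$ of its neighbours are already coloured and therefore at most $\Delta(G)-1$ palette colours are forbidden; the vertex $x_n$ has all of its at most $\Delta(G)-1$ neighbours coloured, so again at most $\Delta(G)-1$ colours are forbidden. In both parity cases the palette contains at least $\Delta(G)$ colours, so a free colour always remains, and the greedy colouring succeeds.

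The odd case is in fact immediate, since a palette of size $\Delta(G)+1$ leaves a free colour at every vertex even without the ordering; the real content, and the only place where non-regularity is used, is the even case, where the palette has exactly $\Delta(G)$ colours and the degree deficiency at $x_n$ is what stops the greedy procedure from getting stuck at the last vertex. The main point to verify carefully is therefore the single-colour-forbidding claim of the second paragraph, after which the argument is the standard Brooks' greedy colouring transplanted to the symmetric-set setting.
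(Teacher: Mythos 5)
Your proof is correct and follows essentially the same route as the paper: order the vertices via Lemma \ref{order} ending at a vertex of degree at most $\Delta(G)-1$, observe that each already-coloured neighbour forbids exactly one colour (namely $\sigma(e)c(w)$, which lies in the symmetric set), and colour greedily with a palette of size $\Delta(G)$ or $\Delta(G)+1$ according to the parity of $\Delta(G)-t$. Your write-up is in fact more careful than the paper's, since it makes the one-forbidden-colour observation and the parity bookkeeping explicit.
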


\begin{proof}
Let $v$ be a vertex having degree $d_G(v) \leq \Delta -1$. 
By Lemma \ref{order}, there exists an ordering of the vertices $x_1,...,x_n$ such that $x_n=v$ and for each $i < n$ the vertex $x_i$ has neighbors among $x_{i+1},..., x_n$. 
We follow this order to color the vertices by using the greedy algorithm. 
We can first use the $t$ self-inverse colors, and then add pairs of non-self-inverse colors when it is necessary.\\
If $\Delta - t = 2n$ is even, then we use exactly $n$ non-self-inverse colors $\pm s$.  Each vertex $x_i$, $i< n$, has at most $\Delta - 1$ neighbors which have been colored previously. Since it also holds $d(x_n) \leq \Delta - 1$, the graph has an $S_{2k}^t$-coloring, with $t+2k= \Delta$.\\
If $\Delta - t$ is odd, then the result follows similarly. 

\end{proof}

For the proof of Theorem \ref{Brooks'adapt} we also use the following lemma.

\begin{lemma} [\cite{LOVASZ}]\label{tool}
Let $G$ be a 2-connected graph with $\Delta(G) \geq 3$ other than a complete graph. Then $G$ contains a pair of vertices $a$ and $b$ at distance 2 such that the graph $G- \{a,b\}$ is connected.
\end{lemma}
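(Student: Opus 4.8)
The plan is to argue by the vertex connectivity $\kappa(G)$, which is either $\ge 3$ or exactly $2$ since $G$ is $2$-connected. Before splitting into cases, it is worth recording how the two hypotheses are used. The assumption that $G$ is not complete guarantees the existence of a nonadjacent pair of vertices, which is what we ultimately want to delete. The assumption $\Delta(G)\ge 3$ is genuinely necessary: it excludes cycles, for which the statement is false, since deleting two vertices $a,b$ at distance $2$ from a cycle isolates the vertex lying between them and so disconnects the graph. Thus any correct proof must use $\Delta(G)\ge 3$ in an essential way.

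The case $\kappa(G)\ge 3$ is the easy one. Since $G$ is not complete it has a nonadjacent pair; choose a nonadjacent pair $a,b$ minimizing $d(a,b)$. Then $d(a,b)=2$: if a shortest $a$--$b$ path had length $d\ge 3$, its first and third vertices would form a nonadjacent pair at distance $2<d$, contradicting minimality. Because deleting any two vertices from a $3$-connected graph leaves a connected graph, $G-\{a,b\}$ is connected, and we are done.

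The case $\kappa(G)=2$ is the heart of the argument. Here I would pick a $2$-cut, and hence a vertex $v$ for which $G-v$ is connected but has a cut vertex. I then pass to the block--cut-vertex tree of $G-v$; since $G-v$ has a cut vertex this tree has at least two leaves, i.e.\ at least two end blocks $B_1,B_2$, each containing a unique cut vertex $c_i$ of $G-v$. The key structural observation is that $v$ must have a neighbor in $V(B_i)\setminus\{c_i\}$ for each $i$, for otherwise $c_i$ would separate $B_i\setminus\{c_i\}$ from the rest of $G$ and hence be a cut vertex of $G$, contradicting $2$-connectivity. Choosing such neighbors $a\in B_1$ and $b\in B_2$, these vertices lie in distinct blocks and are not cut vertices, so they are nonadjacent; as both are adjacent to $v$ we get $d(a,b)=2$. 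Moreover $a$ and $b$ are non-cut vertices of $G-v$ lying in different end blocks, so $(G-v)-\{a,b\}$ is connected, and restoring $v$ keeps the graph connected provided $v$ still has a neighbor.

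The hard part is exactly that final proviso. If $\deg_G(v)=2$, then the neighbors of $v$ are precisely $a$ and $b$, and $G-\{a,b\}$ isolates $v$, so this particular pair fails; this is where $\Delta(G)\ge 3$ must be invoked. I expect the real content of the proof to be the claim that one may always choose the separating vertex $v$ to have degree at least $3$, equivalently that one may choose the $2$-cut so that the common neighbor is not forced to have degree two. When every such choice were to fail, $G$ would be forced into a narrow ``cyclic chain of blocks'' assembled from degree-two cut pairs, and one disposes of these residual configurations either by inspecting them directly or by re-routing the construction through a vertex of degree at least $3$, which exists precisely because $\Delta(G)\ge 3$ and $G$ is not a cycle. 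Establishing that this selection is always possible is the delicate step; everything else is routine bookkeeping on the block--cut-vertex tree. This is in line with the argument of Lovász \cite{LOVASZ}.
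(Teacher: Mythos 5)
Your overall strategy is the standard one behind this lemma (note that the paper itself gives no proof, citing Lov\'asz), and most of what you write is correct and carefully justified: the $\kappa(G)\ge 3$ case is complete, and in the $\kappa(G)=2$ case your block-tree setup --- a vertex $v$ with $G-v$ connected but separable, end blocks $B_1,B_2$ with cut vertices $c_1,c_2$, neighbors $a\in B_1\setminus\{c_1\}$ and $b\in B_2\setminus\{c_2\}$ of $v$, nonadjacency of $a,b$, and connectivity of $(G-v)-\{a,b\}$ --- is sound. But the proposal is genuinely incomplete, and you say so yourself: when $\deg_G(v)=2$ the pair $\{a,b\}$ isolates $v$, and at that point you only \emph{conjecture} that one can always choose the separating vertex $v$ with $\deg_G(v)\ge 3$, to be established ``by inspecting the residual configurations directly or by re-routing the construction.'' That sentence is a placeholder, not an argument. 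As written, you have proved the lemma only when some vertex lying in a $2$-cut has degree at least $3$ --- which does cover regular graphs with $\Delta\ge 3$, incidentally the only situation in which the paper actually invokes the lemma (in the proof of Theorem \ref{Brooks'adapt}) --- but not the lemma as stated.

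The missing case is not vacuous. Observe that \emph{every} vertex of \emph{every} $2$-cut is a valid choice of $v$ (if $\{u,v\}$ is a $2$-cut, then $u$ is a cut vertex of $G-v$), so the bad situation is precisely that every vertex of every $2$-cut has degree exactly $2$. This forces concrete structure: a $2$-cut $\{x,y\}$ then yields exactly two components $C_1,C_2$ of $G-\{x,y\}$, with $x$ and $y$ each having exactly one neighbor in each $C_i$ and $xy\notin E(G)$; and all pairs produced by your construction (e.g., the two neighbors of $x$) isolate a degree-$2$ vertex. One must then find the distance-$2$ pair elsewhere: for instance, with $\deg_G(v)=2$ and $N(v)=\{a,b\}$, the pair $\{v,b'\}$ works whenever $a$ has a neighbor $b'\in B_1\setminus\{c_1\}$, since then $d(v,b')=2$ via $a$ and $G-\{v,b'\}=(G-v)-b'$ is connected because $b'$ is not a cut vertex of $G-v$; but when $\deg_G(a)=2$ and its second neighbor is $c_1$ (e.g., $B_1$ a $K_2$-block) one has to recurse deeper into the block structure, and this is exactly where $\Delta(G)\ge 3$ must eventually be cashed in to terminate the recursion. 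None of that bookkeeping is done in your proposal, so there is a genuine gap at precisely the step you flagged as delicate.
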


\subsubsection{Proof of Theorem \ref{Brooks'adapt} }

\begin{proof}
Propositions \ref{CompleteBrooksEven} and \ref{CycleBrooks} imply that the statement is true for complete graphs and circuits. By Lemma \ref{NotRegularBrooks} it suffices to prove it for non-complete regular graphs with a maximum vertex of degree at least 3. We can also assume that the graph is connected.

Let $(G,\sigma)$ be a signed graph of order $n$ and $0 \leq t \leq \chi(G)$. 
If $\Delta(G) - t$ is odd, then $(G, \sigma)$ can be colored greedily with $\Delta(G) + 1$ colors. Hence, we focus on the case where $\Delta(G)-t$ is even.  
We show that the graph has an $S^t_{2k}$-coloring with $t+2k \leq \Delta(G)$.  

Assume that $(G,\sigma)$ is 2-connected. By Lemma \ref{tool} there are two non-adjacent vertices $a$ and $b$ which have a common neighbor $x$ and $G-\{a,b\}$ is connected.
By possible switching we can assume that $ax$ and $bx$ both are positive. Order the 
vertices of $G$ as in Lemma \ref{order} so that $x_1=a$, $x_2=b$ and $x_n=x$. 
The vertices $x_1$ and $x_2$ can receive the same color since they are not adjacent. The vertices $x_3$,...,$x_{n-1}$ can be colored greedily, indeed each vertex has at most $\Delta(G)-1$ neighbors which are already colored. 
Since two neighbors of $x$ have the same color, there is an element 
of $S^t_{2k}$ which is not used in the neighborhood of $x$. 

Assume now that $(G, \sigma)$ is not 2-connected, that is, there exists a cut vertex $v$. 

Let $H_1$, $H_2$,...,$H_k$ be the components of $G-v$. For each $i\in \{1,...,k\}$,
the subgraph $H_i'=H_i \cup v$ is not regular and $d_{H_i'}(v) < \Delta(H_i')$. Thus, it can be colored by $\Delta(G)$ colors by Lemma \ref{NotRegularBrooks}. By relabeling we can always suppose that $v$ is colored with the same element in each graph, so the entire graph is also $S_{2k}^t$-colorable, with $t+2k=\Delta(G)$. 
\end{proof}

\begin{corollary} [Brooks' Theorem \cite{brooks_1941}] \label{BT_original}
	Let $G$ be a connected graph. If $G$ is neither complete nor an odd circuit, then $\chi(G) \leq \Delta(G)$. 
\end{corollary}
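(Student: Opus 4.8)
The plan is to derive Corollary \ref{BT_original} as the balanced special case of Theorem \ref{Brooks'adapt}, applied to the all-positive signature $(G,\texttt{\bf 1})$. The guiding observation is that for a balanced signed graph a symset coloring is nothing but an ordinary vertex coloring. First I would record this dictionary: if $c$ is a proper $S_{2k}^t$-coloring of $(G,\texttt{\bf 1})$, then for every edge $e=vw$ the condition $c(v)\neq \texttt{\bf 1}(e)c(w)$ is simply $c(v)\neq c(w)$. Hence, treating the $t+2k$ symbols $0_1,\dots,0_t,\pm s_1,\dots,\pm s_k$ as a palette of distinct colors, $c$ is an ordinary proper coloring of $G$, and conversely. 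Consequently $\chi(G)\leq \chi_{sym}^t(G,\texttt{\bf 1})$ for every admissible $t$, so to prove $\chi(G)\leq \Delta(G)$ it suffices to exhibit one value of $t$ with $\chi_{sym}^t(G,\texttt{\bf 1})\leq \Delta(G)$.

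Next I would fix the parity of $t$. Theorem \ref{Brooks'adapt} delivers the bound $\leq \Delta(G)$ only in the even case $\Delta(G)-t$ even, whereas the odd case merely yields $\leq \Delta(G)+1$; so I would choose $t\in\{0,1\}$ with $t\equiv\Delta(G)\pmod 2$. This choice lies in $\{0,\dots,\chi(G)\}$, since a connected graph has $\chi(G)\geq 1$, and $\chi(G)\geq 2$ whenever $\Delta(G)$ is odd (as there is then at least one edge). With this $t$ the difference $\Delta(G)-t$ is even, and Theorem \ref{Brooks'adapt} states that either $\chi_{sym}^t(G,\texttt{\bf 1})=\Delta(G)+2$ or $\chi_{sym}^t(G,\texttt{\bf 1})\leq \Delta(G)$.

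The crux is then to rule out the value $\Delta(G)+2$ under the hypotheses of Brooks' Theorem, namely that $G$ is neither complete nor an odd circuit. I would go through the five exceptional cases of Theorem \ref{Brooks'adapt} and check that none survives for $(G,\texttt{\bf 1})$: this signed graph is balanced, so the two ``unbalanced circuit'' cases are vacuous, while each of the remaining three cases forces $G$ to be a complete graph or a balanced odd circuit, both excluded by hypothesis. Hence $\chi_{sym}^t(G,\texttt{\bf 1})\leq \Delta(G)$, and combined with the dictionary from the first step this gives $\chi(G)\leq \Delta(G)$, as required.

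I expect the only delicate point to be the bookkeeping of the exceptional list together with the parity choice of $t$; everything else is a direct substitution of $\sigma=\texttt{\bf 1}$ into the already proven theorem. In particular no fresh combinatorial argument is needed, since the structural content has already been carried out in Propositions \ref{CompleteBrooksEven} and \ref{CycleBrooks} and in the proof of Theorem \ref{Brooks'adapt}.
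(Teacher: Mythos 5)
Your proof is correct, and it rests on the same engine as the paper's --- Theorem \ref{Brooks'adapt} specialized to the all-positive signature --- but you enter that theorem from the opposite end. The paper argues by contradiction: it first records $\chi(G) \leq \Delta(G)+1$, assumes $\chi(G) = \Delta(G)+1$, takes $t = \Delta(G) = \chi(G)-1$, and invokes Proposition \ref{trivialbound} to force $\chi^t_{sym}(G,\textbf{1}) = \Delta(G)+2$; the classification in Theorem \ref{Brooks'adapt}, together with the balancedness of $(G,\textbf{1})$, then says $G$ is complete or an odd circuit, contradicting the hypothesis. You instead work directly with a small $t \in \{0,1\}$ chosen so that $\Delta(G)-t$ is even, discard all five exceptional cases (two are vacuous by balancedness, three are excluded by hypothesis), obtain $\chi^t_{sym}(G,\textbf{1}) \leq \Delta(G)$, and transfer back to $G$ via the dictionary that a symset coloring of $(G,\textbf{1})$ is an ordinary proper coloring, i.e.\ $\chi(G) \leq \chi^t_{sym}(G,\textbf{1})$. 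The trade-off: the paper's version is shorter because Proposition \ref{trivialbound} drops it immediately into the extremal case of the classification, whereas your version needs neither Proposition \ref{trivialbound} nor the preliminary bound $\chi(G)\leq\Delta(G)+1$, avoids the contradiction framing, and makes explicit the lower bound $\chi(G) \leq \chi^t_{sym}(G,\textbf{1})$ that the paper uses only implicitly (for instance inside the proof of Theorem \ref{upperbound_chi(G)}). Both routes are sound, and neither is circular, since the proof of Theorem \ref{Brooks'adapt} nowhere relies on the unsigned Brooks' theorem.
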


\begin{proof}
	By induction we get $\chi(G) \leq \Delta(G)+1$. Assume that 
	$\chi(G) = \Delta(G)+1$. For $t= \Delta(G)$ it follows 
	by Proposition \ref{trivialbound} that 
	$\chi^t_{sym}(G,\textbf{+}) = \Delta(G) + 2$. Hence, by Theorem \ref{Brooks'adapt},
	$G$ is a complete graph or it is an odd circuit.  
\end{proof}

As a simple consequence of Theorem \ref{upperbound_chi(G)} and Corollary
\ref{BT_original} we obtain the following statement on the signed extension of a graph. 

\begin{corollary} \label{pm_bound}
	Let $G$ be a connected graph. If $G$ is a complete graph or an odd circuit, then $\chi_{sym}^t (\pm G)= \Delta (\pm G)+2-t$. Otherwise $\chi_{sym}^t(\pm G) \leq \Delta(\pm G) - t$.
\end{corollary}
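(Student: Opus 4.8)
The plan is to derive Corollary~\ref{pm_bound} directly from the structural results already in hand, namely Theorem~\ref{upperbound_chi(G)}, Corollary~\ref{BT_original}, and the basic identity relating the maximum degree of a graph to that of its signed expansion. First I would record the elementary observation that for the signed expansion we have $\Delta(\pm G) = 2\Delta(G)$, since each edge of $G$ is doubled in $\pm G$ (one positive and one negative copy), so every vertex sees twice as many incident edges. This turns the target bounds into statements purely about $\chi(G)$ and $\Delta(G)$, which is where Brooks' Theorem enters.

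Next I would treat the two cases of the dichotomy. By Theorem~\ref{upperbound_chi(G)} we already know the exact value $\chi_{sym}^t(\pm G) = 2\chi(G) - t$ for every $t \in \{0,\dots,\chi(G)\}$; this is the key equality and it does all the quantitative work. So the whole corollary reduces to rewriting $2\chi(G) - t$ in terms of $\Delta(\pm G) = 2\Delta(G)$. When $G$ is a complete graph or an odd circuit, Corollary~\ref{BT_original} fails to apply and instead $\chi(G) = \Delta(G) + 1$; substituting gives $2\chi(G) - t = 2\Delta(G) + 2 - t = \Delta(\pm G) + 2 - t$, which is the first claimed formula. In the remaining case, Brooks' Theorem (Corollary~\ref{BT_original}) yields $\chi(G) \leq \Delta(G)$, and then $\chi_{sym}^t(\pm G) = 2\chi(G) - t \leq 2\Delta(G) - t = \Delta(\pm G) - t$, which is exactly the second claimed inequality.

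The only subtlety to check carefully is the hypothesis bookkeeping for $t$ and the precise statement of Brooks' Theorem: Corollary~\ref{BT_original} requires $G$ to be connected and neither complete nor an odd circuit, and the corollary we are proving carries exactly the matching case split, so nothing extra is needed. I would also verify that Theorem~\ref{upperbound_chi(G)} is genuinely giving equality $\chi_{sym}^t(\pm G) = 2\chi(G) - t$ rather than merely an upper bound, since the sharpness of the exact-value claim in the ``complete or odd circuit'' case depends on it; fortunately that theorem states the equality explicitly for the signed expansion.

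I expect no real obstacle here, as the statement is a direct packaging of two earlier results. The one place that warrants a sentence of care is confirming $\Delta(\pm G) = 2\Delta(G)$ and that the parity conventions built into the definition of $\chi_{sym}^t$ are consistent with writing $2\chi(G) - t$ in the two substituted forms; since $2\chi(G)-t$ and $t$ have the same parity, the reformulated expressions $\Delta(\pm G)+2-t$ and $\Delta(\pm G)-t$ inherit the correct parity automatically, so no additional parity argument is required.
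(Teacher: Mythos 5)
Your proposal is correct and follows exactly the route the paper intends: it derives the corollary by combining the equality $\chi_{sym}^t(\pm G)=2\chi(G)-t$ from Theorem~\ref{upperbound_chi(G)} with Brooks' Theorem (Corollary~\ref{BT_original}) and the observation $\Delta(\pm G)=2\Delta(G)$. Nothing is missing; the case split and parity bookkeeping match the paper's stated derivation.
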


\subsection{Symset $t$-Chromatic Spectrum}

Let $G$ be a graph and $\Sigma(G)$ be the set of its non-equivalent signatures. The \emph{symset $t$-chromatic spectrum} of $G$ is the set $\Sigma_{\chi_{sym}^t}(G):= \{ \chi_{sym}^t(G, \sigma) : \sigma \in \Sigma(G) \}$. We define $m_{\chi_{sym}^t}(G)= min \Sigma_{\chi_{sym}^t}(G)$, and $M_{\chi_{sym}^t}(G)= max  \Sigma_{\chi_{sym}^t}(G)$.

Since $|S_{2k}^t|$ has the same parity as $t$, it follows that
the $t$-chromatic spectrum contains only values of the same parity.  

The question of the 
$t$-chromatic spectrum
of a signed graph was studied for $t \in \{0,1,2\}$ 
in \cite{Spectrum} first. There, it is shown that 
$\Sigma_{\chi_{sym}^0}(G) \cup \Sigma_{\chi_{sym}^1}(G)$
and
$\Sigma_{\chi_{sym}^1}(G) \cup \Sigma_{\chi_{sym}^2}(G)$
are intervals of integers. 

Observe that, if $t=\chi (G)$, then it follows that $\Sigma_{\chi_{sym}^t}(G)= \{t\}$. Hence, we assume $t \leq \chi(G)-1$.

\begin{proposition}\label{minimum}
	Let $G$ be a graph and $t$ a positive integer. Then $m_{\chi_{sym}^t}(G) = t+2$.
\end{proposition}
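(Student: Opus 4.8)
The plan is to prove matching lower and upper bounds, using throughout the standing assumption $t \le \chi(G)-1$ (in force here, since $t=\chi(G)$ would give $\Sigma_{\chi_{sym}^t}(G)=\{t\}$). First I would establish the lower bound $\chi_{sym}^t(G,\sigma)\ge t+2$ for \emph{every} signature $\sigma$, which already yields $m_{\chi_{sym}^t}(G)\ge t+2$. Writing $\chi_{sym}^t(G,\sigma)=t+2k$, I would rule out $k=0$: in that case $(G,\sigma)$ admits a proper colouring using only the $t$ self-inverse colours, and Proposition \ref{structure} says each such colour class is independent in $G$. This colouring is then a proper $t$-colouring of $G$, forcing $\chi(G)\le t$ and contradicting $t\le\chi(G)-1$. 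Hence $k\ge 1$ and $\chi_{sym}^t(G,\sigma)\ge t+2$.

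Next I would exhibit one signature attaining the bound. I would fix a proper $\chi(G)$-colouring with classes $V_1,\dots,V_{\chi(G)}$; since $t\le\chi(G)-1$ the set $W:=V_{t+1}\cup\dots\cup V_{\chi(G)}$ is nonempty. Equipping $G$ with the all-negative signature $\texttt{\bf -1}$, I would define $c$ by giving every vertex of $V_j$ the self-inverse colour $0_j$ for $1\le j\le t$, and every vertex of $W$ the single non-self-inverse colour $+s$; these colours come from $S_2^t$, so $\chi_{sym}^t(G,\texttt{\bf -1})\le t+2$ once $c$ is shown to be proper.

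The main step is the edge-by-edge verification that $c$ is proper for $(G,\texttt{\bf -1})$. There are no edges inside a class $V_j$. For an edge between $V_i$ and $V_j$ with $i\ne j$, both ends carry self-inverse colours, so the requirement reduces to $0_i\ne 0_j$. For an edge $e$ between $V_j$ and $W$, we have $\sigma(e)(+s)=-s$, a non-self-inverse element, which differs from $0_j$. For an edge $e$ inside $W$, both ends are coloured $+s$ and $\sigma(e)(+s)=-s\ne +s$. Thus $c$ is a proper $S_2^t$-colouring, giving $\chi_{sym}^t(G,\texttt{\bf -1})=t+2$ when combined with the lower bound, and therefore $m_{\chi_{sym}^t}(G)=t+2$.

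I expect the only delicate point to be recognising that signing $G$ all-negative makes $G[W]$ antibalanced, which is exactly the structural condition of Proposition \ref{structure} that allows an entire block to be coloured by a single non-self-inverse colour; every remaining edge is handled automatically because self-inverse colours ignore the edge sign. Any signature whose restriction to $G[W]$ is antibalanced would serve equally well, but $\texttt{\bf -1}$ is the cleanest representative to name.
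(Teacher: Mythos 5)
Your proof is correct and takes essentially the same approach as the paper: the all-negative signature $(G,\texttt{\bf -1})$ as the extremal witness, plus the observation that $t<\chi(G)$ and parity rule out any value below $t+2$. The only difference is cosmetic — the paper colours \emph{every} vertex of $(G,\texttt{\bf -1})$ with the single non-self-inverse colour $+s$ (properness is immediate since every edge is negative), so your partition into $V_1,\dots,V_t,W$ and the edge-by-edge check, while valid, are not needed.
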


\begin{proof}	
	Consider the signed graph $(G,\textbf{-})$ and the coloring $c: V(G) \rightarrow S_2^t$ with $c(v)=1$ $\forall v \in V(G)$. This coloring is proper and uses $t+2$ colors. 
	Since $t < \chi (G)$, there exists no signature $\sigma '$ such that $\chi_{sym}^t(G, \sigma')=t$, so $m_{\chi_{sym}^t}(G) = t+2$. 
\end{proof}

A $\lambda_t$-chromatic signed graph $(G,\sigma)$ is $\lambda_t$-\emph{critical} if  $\chi^t_{sym}(G-v,\sigma_{|_{G-v}}) < \lambda_t$ for every $v \in V(G)$. 

\begin{lemma} \label{criticalGraph}
	If $(G, \sigma)$ is a $\lambda_t$-chromatic signed graph with $\lambda_t=t+2k$, then $\chi_{sym}^t(G-v, \sigma_{|_{G-v}}) \in \{t+2k, t+2k-2\}$.
\end{lemma}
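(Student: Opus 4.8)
The plan is to prove this critical-graph-type lemma by establishing two inequalities: $\chi_{sym}^t(G-v, \sigma_{|_{G-v}}) \leq t+2k$ and $\chi_{sym}^t(G-v, \sigma_{|_{G-v}}) \geq t+2k-2$, and then invoke the parity constraint to rule out all intermediate values. The parity observation is key: since $|S_{2k}^t|$ has the same parity as $t$, the $t$-chromatic number is always congruent to $t$ modulo $2$. Thus $\chi_{sym}^t(G-v)$ differs from $\chi_{sym}^t(G) = t+2k$ by an even number, so once the two bounds are in place, the only surviving candidates in the range are exactly $t+2k$ and $t+2k-2$.

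The upper bound $\chi_{sym}^t(G-v) \leq t+2k$ should be immediate: any proper $S_{2k}^t$-coloring of $(G,\sigma)$ restricts to a proper $S_{2k}^t$-coloring of the subgraph $(G-v, \sigma_{|_{G-v}})$, since deleting a vertex only removes coloring constraints. Hence the minimal symmetric set needed for $G-v$ can be no larger than the one that works for $G$, giving $\chi_{sym}^t(G-v) \leq t+2k$.

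The lower bound $\chi_{sym}^t(G-v) \geq t+2k-2$ is where the real argument lies, and I expect it to be the main obstacle. The natural approach is contrapositive: suppose $(G-v, \sigma_{|_{G-v}})$ admits a proper $S_{2k'}^t$-coloring with $t+2k' \leq t+2k-4$, i.e.\ $k' \leq k-2$. The goal is to extend this coloring to all of $G$ (equivalently, to color $v$) using a symmetric set of size at most $t+2k-2$, contradicting the minimality $\chi_{sym}^t(G,\sigma) = t+2k$. Adding back $v$ requires a color for $v$ that avoids the forbidden values $\sigma(e)c(w)$ over all edges $e=vw$; the number of such forbidden constraints is at most $d_G(v)$. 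The subtlety is that to extend the coloring while keeping the set small, I would add a single fresh pair of non-self-inverse colors $\pm s$ to the palette used on $G-v$, enlarging it from $t+2k'$ to $t+2k'+2 \leq t+2k-2$. One of $+s$ or $-s$ is then free to assign to $v$ since neither is constrained by the already-used colors (a fresh color cannot be forbidden by any previously placed color, regardless of edge signs). This yields a proper coloring of $(G,\sigma)$ with at most $t+2k-2$ colors, the desired contradiction.

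The point requiring care is ensuring that introducing the new pair $\pm s$ genuinely suffices to color $v$ and does not inadvertently need more than one extra pair; because a newly introduced non-self-inverse color appears nowhere in the neighborhood of $v$, the constraint $c(v) \neq \sigma(e)c(w)$ is automatically satisfied for that color, so one pair always works. Combining the upper bound, the lower bound, and the parity congruence $\chi_{sym}^t(G-v) \equiv t \pmod 2$ then forces $\chi_{sym}^t(G-v, \sigma_{|_{G-v}}) \in \{t+2k, t+2k-2\}$, completing the proof.
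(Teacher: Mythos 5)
Your proposal is correct and follows essentially the same route as the paper: the paper's proof is exactly the contradiction argument you give (if $\chi_{sym}^t(G-v,\sigma_{|_{G-v}}) \leq t+2k-4$, extend the coloring to $v$ by adding one fresh pair of non-self-inverse colors, contradicting $\chi_{sym}^t(G,\sigma)=t+2k$), with the restriction upper bound and the parity argument left implicit. Your write-up merely spells out the details the paper compresses into ``can be easily extended by adding at most two colors.''
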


\begin{proof}
	Suppose that there exists a vertex $v$ such that $\chi_{sym}^t(G-v, \sigma_{|_{G-v}})\leq t+2k-4$. 
The coloring can be easily extended to $(G, \sigma)$ by adding at most two colors, so $\chi_{sym}^t(G, \sigma) \leq t+2k-2$, which is a contradiction.
\end{proof}

As a consequence, if a signed graph $(G,\sigma)$ is $\lambda_t$-critical, then $\chi_{sym}^t(G-v, \sigma_{|_{G-v}})=\lambda_t -2$ for each $v\in V(G)$. In particular, the following statement holds:

\begin{theorem}\label{critical_subgraphs}
	If $(G, \sigma)$ is a $\lambda_t$-chromatic signed graph with $\lambda_t=t+2k$, then $(G, \sigma)$ has a critical $\lambda_t ^i$-chromatic subgraph for each $\lambda_t ^i = t+2i$, $i \in \{1,...,k\}$.
\end{theorem}

\begin{proof}
	First, we stepwise remove vertices $v$ such that the removal of $v$ does not decrease the symset $t$-chromatic number. The remaining subgraph $(G',\sigma_{|_{G'}})$ is $\lambda_t$-critical.\\
	Secondly, we remove another vertex $w$ from $G'$. Lemma \ref{criticalGraph} implies that this graph has $t$-chromatic number $t+2k-2$. By proceeding as before, we find a critical subgraph with the same $t$-chromatic number. This process can be iterated until we obtain a $\lambda_t ^i$-critical graph, for each $i \in \{1,...,k\}$.
\end{proof}

\begin{theorem}\label{t-ChromaticSpectrum}
	Let $G$ be a graph, then $\Sigma_{\chi_{sym}^t}(G)= \{m_{\chi_{sym}^t(G)}=t+2, t+4,...,t+2k=M_{\chi_{sym}^t}(G)\}$.
\end{theorem}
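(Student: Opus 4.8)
The plan is to show the spectrum is a full arithmetic progression of step $2$ by a discrete intermediate-value argument run over the space of all signatures of $G$. Write $f(\sigma) = \chi_{sym}^t(G,\sigma)$. Three facts bracket the spectrum: every value of $f$ has the same parity as $t$ (noted just before the statement); the minimum equals $t+2$ by Proposition \ref{minimum} (realized by $(G,\textbf{-1})$, and the same all-negative coloring argument covers the case $t=0$); and the maximum is $M_{\chi_{sym}^t}(G)=t+2k$ by definition. It remains to show that every intermediate value $t+2i$, $1\le i\le k$, is attained by some signature.

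The key step is a Lipschitz-type lemma: if two signatures $\sigma,\sigma'$ of $G$ differ on exactly one edge $e=uv$, then $|f(\sigma)-f(\sigma')|\le 2$. To see $f(\sigma')\le f(\sigma)+2$, take an optimal proper $S_{2k}^t$-coloring $c$ of $(G,\sigma)$ with $t+2k=f(\sigma)$. Since $\sigma$ and $\sigma'$ agree off $e$, the only constraint that can fail for $(G,\sigma')$ is the one on $e$. Recolor $u$ by a fresh non-self-inverse color $p$, where $\pm p$ is a new pair in $S_{2(k+1)}^t$. Because $\pm p$ is used nowhere else, for every edge $uw$ we have $\sigma'(uw)c(w)\in\{c(w),-c(w)\}$, neither of which equals $p$, so no edge incident to $u$ is violated; all remaining edges keep their signs and colors and stay proper. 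This yields a proper $S_{2(k+1)}^t$-coloring of $(G,\sigma')$, whence $f(\sigma')\le f(\sigma)+2$. Exchanging the roles of $\sigma$ and $\sigma'$ gives the reverse inequality, and the parity constraint then forces the difference to be $0$ or $2$.

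With the lemma in hand I would run the discrete intermediate-value theorem. Since $f$ is constant on switching-equivalence classes (Proposition \ref{equivalent}), it is a well-defined function on the set of all $2^{|E(G)|}$ signatures, and any two signatures are joined by a path that flips, one at a time, the edges on which they differ. Choose a path $\tau_0,\dots,\tau_m$ with $f(\tau_0)=t+2$ (the minimum) and $f(\tau_m)=t+2k$ (the maximum). Along the path consecutive values differ by $0$ or $2$; hence for each target $v=t+2i$ the first index $j$ with $f(\tau_j)\ge v$ satisfies $f(\tau_{j-1})\le v-2$ (by the parity of $f$) and $f(\tau_j)\le f(\tau_{j-1})+2\le v$, forcing $f(\tau_j)=v$ (when $j=0$ the target is $v=t+2$ itself). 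Thus every value $t+2,t+4,\dots,t+2k$ is realized by some signature, and combined with the parity restriction this gives the claimed equality of sets.

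I expect the main obstacle to be the clean verification of the Lipschitz lemma, specifically confirming that recoloring the single endpoint $u$ with a fresh pair $\pm p$ repairs the one broken edge without creating new conflicts; this rests on the freshness of the color rather than on any structure of $(G,\sigma)$, which is what makes the uniform bound $2$ available. Once the local repair is established the global conclusion is routine, and the only degenerate point to dispatch is $k=1$, where the spectrum is the singleton $\{t+2\}$ and the minimum coincides with the maximum.
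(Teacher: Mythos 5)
Your proof is correct, but it takes a genuinely different route from the paper. The paper first proves a vertex-deletion Lipschitz bound (Lemma \ref{criticalGraph}: deleting a vertex drops $\chi_{sym}^t$ by at most $2$), uses it to extract critical subgraphs realizing every intermediate value $t+2i$ (Theorem \ref{critical_subgraphs}), and then extends the signature of such a subgraph $H$ to all of $G$ — making edges to the non-self-inverse color $1$ and edges outside $H$ negative, the rest positive — so that the outside vertices can all be colored $1$ without raising the chromatic number. Your argument replaces both steps with a single edge-flip Lipschitz lemma ($|f(\sigma)-f(\sigma')|\le 2$ when $\sigma,\sigma'$ differ on one edge, proved by recoloring one endpoint with a fresh non-self-inverse pair) followed by a discrete intermediate-value argument along a path of single-edge flips from $(G,\textbf{-1})$ to a maximizing signature; the fresh-color repair and the parity bookkeeping are verified correctly, and you rightly note that the spectrum, though defined over non-equivalent signatures, equals the image of $f$ over all signatures by Proposition \ref{equivalent}. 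Your route is more self-contained and arguably cleaner: it never leaves the full graph $G$, avoids the signature-extension construction, and isolates a reusable principle (any signature parameter that is $2$-Lipschitz under edge flips and parity-constrained has an interval spectrum). What the paper's route buys in exchange is the structural byproduct of independent interest — the existence of critical $\lambda_t^i$-chromatic subgraphs for every intermediate $i$ — which your argument does not produce.
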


\begin{proof}
	Let $(G, \sigma)$ be a signature such that $\chi_{sym}^t(G, \sigma)=M_{\chi_{sym}^t}(G)=t+2k$. By Theorem \ref{critical_subgraphs}, we know that for each value of $\lambda_t ^i=t+2i$, where $i \in \{1,...,k\}$, $(G,\sigma)$ has a $\lambda_t^i$-chromatic subgraph $(H, \tau)$. Our aim is to prove that the signature $\tau$ can be extended to a signature $\tau '$ in $G$ such that $\chi_{sym}^t(G, \tau')=t+2i$.\\
	Let $c: V(H) \rightarrow S_{2i}^t$ the $\lambda_t^i$-coloring of $(H, \tau)$. For each edge $uv \in E(G)$
	we define $\tau '$ in the following way:\\
	If $u, v \in V(H)$, $\tau(uv)= \tau '(uv)$ .\\
	If $u, v  \notin V(H)$ or $v\in V(H)$ and $u \notin V(H)$ and $v$ is colored with 1, $\tau '(uv)= -$.\\
	If $v\in V(H)$ and $u \notin V(H)$ and $v$ is not colored with 1, $\tau '(uv)=+$.\\
	By defining $c':V(G) \rightarrow S_{2i}^t$ as $c'(v)=c(v)$ if $v\in V(H)$ and $c'(v)=1$ if $v \notin V(H)$ we obtain a proper $S_{2i}^t$ coloring, so the statement follows.
\end{proof}

\section{The Symset Chromatic Number} \label{Sec:symset chromatic}

Next we skip the constraint on the set of colors given by fixing the value of $t$ and we focus on the symset chromatic number.
If $c$ is an $S^t_{2k}$-coloring, then $c^{-1}(0_j)$ is also called a 
 \emph{self-inverse color class} for each $j \in \{1,\dots,t\}$. Similarly, 
 $c^{-1}(\pm s_i)$ is called a \emph{non-self-inverse color class}
 for $i \in\{1, \dots,k\}$.
Clearly, a self-inverse color class is an independent set of the graph, while a non-self-inverse color class induces an antibalanced subgraph.
If the color classes are induced by a $\lambda_t$-coloring of
$(G,\sigma)$ and $\lambda_t = \chi_{sym}(G,\sigma)$, then any non-self-inverse
color class induces a non-bipartite subgraph of $G$, see Proposition \ref{structure}.

\subsection{The Chromatic Spectrum and Structural Implications} 

The symset chromatic number gives some information on
circuits in the underlying graph $G$ and on the frustration index $l(G, \sigma)$, which is defined as the minimum number of edges which have to be removed from $(G,\sigma)$ to make the graph balanced \cite{Balance}.

\begin{theorem} \label{structure_sign}
	Let $(G,\sigma)$ be a signed graph and  $t, k \geq 0$.
	If $\chi_{sym}(G,\sigma)  = t+ 2k$, then $G$
	has at least $k$ pairwise vertex-disjoint 
	odd circuits, which are unbalanced in $(G,\sigma)$. 
	In particular, $k \leq l(G, \sigma)$.
\end{theorem}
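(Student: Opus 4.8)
The plan is to read off the $k$ required circuits directly from a minimal coloring, one from each non-self-inverse color class. First I would fix a minimal $S^t_{2k}$-coloring $c$ realizing $\chi_{sym}(G,\sigma)=t+2k$ and, among all such colorings, one using as few non-self-inverse color pairs as possible; since $\chi_{sym}$ is by definition attained by the representation with the largest number of self-inverse colors, this smallest possible number of pairs is exactly the $k$ in the statement. For each such pair let $H_i$ be the subgraph induced by the color class $c^{-1}(\pm s_i)$, carrying the restricted signature. By Proposition~\ref{structure} every $H_i$ is an antibalanced subgraph of $(G,\sigma)$.

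The crux is to show that each $H_i$ is \emph{non-bipartite}, and I expect this to be the main obstacle, since it is the only step where the precise meaning of $\chi_{sym}$ is needed. I would argue by contradiction: if the underlying graph of some $H_i$ were bipartite, then the observation recorded before the theorem (a non-self-inverse class whose underlying graph is bipartite may be recolored with two self-inverse colors) would let me replace $\pm s_i$ by two new self-inverse colors. This yields an $S^{t+2}_{2(k-1)}$-coloring with the same total $t+2k=\chi_{sym}(G,\sigma)$ but only $k-1$ non-self-inverse pairs, contradicting the minimality of the number of pairs. Hence each $H_i$ contains an odd circuit $C_i$.

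Next I would check that every $C_i$ is unbalanced in $(G,\sigma)$. As $H_i$ is antibalanced it is switching-equivalent to $(H_i,\texttt{\bf -1})$, in which every odd circuit is negative; since the sign of a circuit is invariant under switching, $C_i$ is negative, i.e.\ unbalanced, in $H_i$ and therefore in $(G,\sigma)$. Because the classes $c^{-1}(\pm s_i)$ are pairwise disjoint, the circuits $C_1,\dots,C_k$ are pairwise vertex-disjoint, which gives the desired $k$ vertex-disjoint unbalanced odd circuits.

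Finally, for the inequality $k\le l(G,\sigma)$ I would note that vertex-disjoint circuits are in particular edge-disjoint, and that any edge set whose removal makes $(G,\sigma)$ balanced must delete at least one edge of every unbalanced circuit (a surviving negative circuit would keep the graph unbalanced). Applied to the edge-disjoint family $C_1,\dots,C_k$, this forces at least $k$ deletions, so $l(G,\sigma)\ge k$.
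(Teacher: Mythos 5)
Your proof is correct and follows essentially the same route as the paper: both extract an odd circuit from each non-self-inverse color class by arguing that maximality of $t$ (equivalently, minimality of the number of non-self-inverse pairs) forces each such class to be non-bipartite, then use antibalance of the class to conclude the circuit is unbalanced. You simply spell out the steps the paper leaves terse, namely the switching-invariance argument for unbalancedness and the edge-disjointness argument for $k \le l(G,\sigma)$.
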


\begin{proof} For $k=0$ there is nothing to prove. So assume $k \geq 1$.
	Let $c$ be an $S_{2k}^t$-coloring of $(G, \sigma)$ and let $S$ be a non-self-inverse color class.
	Since $t$ is maximum, it follows that $\chi(G[S]) > 2$.
	Hence, $G[S]$ is not bipartite.
	Thus, it contains an odd and therefore unbalanced circuit.
	Since this is true for every subgraph induced by a non-self-inverse color class, the statement follows.  
\end{proof}

Furthermore, the bound regarding the frustration index is sharp: the graph in Fig. \ref{fig:FrustrationIndex} has frustration index 2 and it can be easily seen that a minimum coloring requires two non-self-inverse colors.

\begin{figure}[h]
	\centering
	\includegraphics[width=0.35\linewidth]{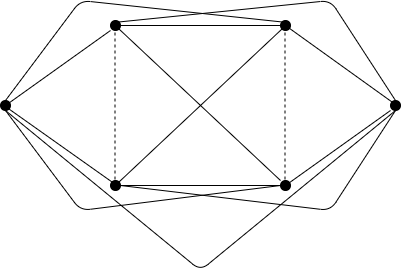}
	\caption[fig:FrustrationIndex]{A graph with frustration index 2 and $\chi _{sym}(G, \sigma)= \chi _{sym}^0 (G, \sigma)=4$.}
	\label{fig:FrustrationIndex}
\end{figure}

Next, we will prove that the symset chromatic spectrum is an interval of integers. 

\begin{theorem}
	The symset chromatic spectrum of a graph $G$ is the interval $\Sigma_{\chi_{sym}} (G) = \{2, \dots,\chi (G)\}$.
\end{theorem}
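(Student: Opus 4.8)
The plan is to establish the two inclusions $\Sigma_{\chi_{sym}}(G)\subseteq\{2,\dots,\chi(G)\}$ and $\{2,\dots,\chi(G)\}\subseteq\Sigma_{\chi_{sym}}(G)$, the second being the substantial one. The containment of the spectrum in the interval is immediate: the bound $\chi_{sym}(G,\sigma)\le\chi(G)$ is Proposition \ref{sym_upper_bound}, and $\chi_{sym}(G,\sigma)\ge 2$ holds for every signature of a graph with an edge, since one color cannot properly color an edge $vw$ (if both endpoints receive the single available color, the constraint $c(v)\neq\sigma(vw)c(w)$ fails regardless of the sign). Assuming $\chi(G)\ge 2$, I would then pin down the two endpoints: $(G,\texttt{\bf 1})$ is balanced, so $\chi_{sym}(G,\texttt{\bf 1})=\chi(G)$ by Proposition \ref{sym_upper_bound}; and $\chi_{sym}(G,\texttt{\bf -1})=2$, because coloring every vertex with a single non-self-inverse color $s$ is proper (every edge is negative and $s\neq-s$), giving an $S_2^0$-coloring.

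The core of the proof is a discrete continuity lemma: \emph{if $\sigma'$ arises from $\sigma$ by reversing the sign of a single edge, then $|\chi_{sym}(G,\sigma)-\chi_{sym}(G,\sigma')|\le 1$.} To see $\chi_{sym}(G,\sigma')\le\chi_{sym}(G,\sigma)+1$, take a minimal $S_{2k}^t$-coloring $c$ of $(G,\sigma)$ and test it on $(G,\sigma')$. The only constraint that can fail is on the flipped edge $e=uv$; if it does not fail, then $c$ already colors $(G,\sigma')$ and no color is added. If it fails, then $c(u)=-\sigma(e)c(v)$, which together with $c(u)\neq\sigma(e)c(v)$ forces $\sigma(e)c(v)$, and hence $c(v)$ and $c(u)$, to be non-self-inverse. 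In particular $k\ge 1$, so $t\le\chi(G)-2$ and there is room to introduce a fresh self-inverse color $0^{*}$ while staying in the range $t\le\chi(G)$. Recoloring $u$ by $0^{*}$ repairs $e$ and breaks no other edge incident to $u$, since $0^{*}$ differs from $\pm c(w)$ for every previously used color $c(w)$; this produces an $S_{2k}^{t+1}$-coloring using one extra color. The reverse inequality follows by reversing $e$ back.

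To finish, I would interpolate between the two endpoints. Order the edges of $G$ and flip them one at a time, producing signatures $\sigma_0=\texttt{\bf 1},\sigma_1,\dots,\sigma_m=\texttt{\bf -1}$ in which consecutive terms differ on exactly one edge. Since $\chi_{sym}$ is invariant under switching (Proposition \ref{sym_upper_bound}), the integers $a_i=\chi_{sym}(G,\sigma_i)$ are genuine elements of $\Sigma_{\chi_{sym}}(G)$; by the lemma $|a_{i+1}-a_i|\le 1$, while $a_0=\chi(G)$ and $a_m=2$. A sequence of integers that moves by at most one at each step and runs from $\chi(G)$ down to $2$ must attain every integer in between, so $\{2,\dots,\chi(G)\}\subseteq\Sigma_{\chi_{sym}}(G)$. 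Combined with the first inclusion this yields the claimed equality.

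I expect the continuity lemma to be the only real obstacle, and within it the failing case: one must check that the endpoints of the flipped edge are necessarily colored by non-self-inverse colors, which simultaneously guarantees $k\ge 1$ (hence the room $t\le\chi(G)-2$ to add a self-inverse color) and makes the repair local. The remaining ingredients — the two endpoint evaluations and the discrete intermediate value step — are routine.
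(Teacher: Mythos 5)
Your proposal is correct, but it takes a genuinely different route from the paper. The paper proves the theorem by induction on the order of $G$: it deletes a vertex $v$, takes for each $i\in\{2,\dots,\chi(G-v)-1\}$ a signature of $G-v$ realizing $i$, normalizes a minimal coloring by switching so that no negative colors are used, and then extends the signature to $G$ (edges from $v$ to self-inverse color classes positive, to non-self-inverse classes negative) so that $v$ can join an existing non-self-inverse class; the top values $\chi(G)$ and, when $\chi(G-v)=\chi(G)-1$, the value $\chi(G)-1$ are handled separately, the latter by an ad hoc signature obtained by making two color classes of an all-positive coloring into one antibalanced class. You replace all of this by a Lipschitz-type lemma --- flipping the sign of one edge changes $\chi_{sym}$ by at most $1$ --- together with the endpoint evaluations $\chi_{sym}(G,\texttt{\bf 1})=\chi(G)$ and $\chi_{sym}(G,\texttt{\bf -1})=2$ and a discrete intermediate value argument along the edge-flip path from $\texttt{\bf 1}$ to $\texttt{\bf -1}$. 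Your key repair step is sound: failure can occur only on the flipped edge, failure forces both endpoint colors to be non-self-inverse, hence $k\ge 1$, hence $t\le\chi(G)-2$, so a fresh self-inverse color is available within the allowed range of $t$; and switching invariance (Propositions \ref{equivalent} and \ref{sym_upper_bound}) legitimately converts the values along the path into elements of the spectrum over non-equivalent signatures. What your route buys: it avoids induction, the continuity lemma is a clean reusable fact about $\chi_{sym}$ as a function on signatures, and it sidesteps the paper's delicate case $\chi(G-v)=\chi(G)-1$, where the paper exhibits only an upper bound of $\chi(G)-1$ for its constructed signature and leaves the matching lower bound implicit. What the paper's route buys: its signature-extension construction is the same technique used for the fixed-$t$ spectrum (Theorem \ref{t-ChromaticSpectrum}), keeping the two spectrum theorems methodologically parallel, and it produces for each target value an explicit signature built from a coloring of a subgraph. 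One small point common to both arguments: the statement degenerates for edgeless graphs ($\chi(G)=1$), where the spectrum is $\{1\}$ rather than the empty interval; your standing assumption $\chi(G)\ge 2$ (like the paper's dismissal of $K_1$ as ``trivial'') should be stated explicitly.
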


\begin{proof}
	We proceed by induction on the order of the graph. If $G$ is the $K_1$, then the statement is trivial. Let us remark that it obviously true for bipartite graphs. 
	
	Let $v \in V(G)$ and $G'=G-v$. 
	By induction hypothesis
	$\Sigma_{\chi_{sym}} (G') = \{2, \dots,\chi (G')\}$.
	Let $i \in \{2, \dots,\chi (G')-1\}$ and $\sigma_i'$ be a signature of
	$G'$ such that $\chi_{sym}(G',\sigma_i') = i$.
	Since $i < \chi(G')$
	it follows that $i = t+2k$ and $k \geq 1$. Let $c'$ be an $S^t_{2k}$-coloring
	of $(G',\sigma_i')$. 
	We also assume, by switching, that $c'$ does not use the negative colors. 
	It implies that all the edges connecting vertices in the same non-self-inverse color class are negative.
	
	Extend $\sigma_i'$ to a signature
	$\sigma_i$ of $G$ as follows. Let $vw \in E(G)$. If $c'(w)$ is self-inverse, then 
	let $\sigma_i(vw)=+$ and $\sigma_i(vw)=-$ for otherwise. Since $v$ is connected
	to a non-self-inverse color class by negative edges only, it can be colored 
	with the same color. Thus, $\chi_{sym}(G,\sigma_i) \leq i$. It cannot be smaller,
	since
	for otherwise we would get a symset-coloring of $(G',\sigma_i')$ with less than $i$ colors. 
	Thus,  $\chi_{sym}(G,\sigma_i) = i$ and
	therefore, $\{2, \dots,\chi (G')-1\} \subseteq \Sigma_{\chi_{sym}} (G)$.
	Since $\chi_{sym}(G, \textbf{+})= \chi(G)$, if $\chi(G')=\chi(G)$ the statement follows.\\
	Assume now that $\chi(G')=\chi(G)-1$. 
	We define now a signature $\sigma$ of $G$ such that $\chi_{sym}(G, \sigma)= \chi(G)-1$. \\
	Let $S_1$ and $S_2$ be two of the $\chi(G')$ self-inverse color classes induced by the all-positive signature of $G'$. 
	Define $\sigma'$ in the following way: for each $e=wz$, $\sigma' (e)= -$ if $\{w,z\} \subseteq S_1 \cup S_2$ and $\sigma'(e)=+$ otherwise. $(G', \sigma')$ can be colored with a pair of non-self-inverse colors instead of two self-inverse colors.
	By extending $\sigma'$ to $\sigma$ as before, we obtain a $S_2^{\chi(G)-3}$-coloring, so the statement follows. 
\end{proof}

Let $(G,\sigma)$ be a signed graph with $\chi_{sym}(G,\sigma)= \lambda_t = t+2k$ ($t$ maximum)
and let $c$ be a $\lambda_t$-coloring of $(G,\sigma)$. Let $0_1, \dots,0_t$ be
the self-inverse colors and $\pm s_1, \dots, \pm s_k$ be the non-self-inverse colors. 
Let $I_p = \bigcup_{j=1}^p c^{-1}(0_{i_j})$ be the union of $p$ self-inverse color
classes and $S_q = \bigcup_{j=1}^q c^{-1}(\pm s_{i_j})$ be the union of $q$ non-self-inverse color classes, and $(H_{p,q},\sigma_{p,q}) = (G[I_p \cup S_q], \sigma_{|_{G[I_p \cup S_q]}})$.

\begin{theorem}
	Let $(G,\sigma)$ be a signed graph with $\chi_{sym}(G,\sigma)= \lambda_t = t+2k$ and let $c$ be a $\lambda_t$-coloring of $(G,\sigma)$.
	Then $\chi_{sym}(H_{p,q}, \sigma_{p,q})) = \chi_{sym}^p(H_{p,q}, \sigma_{p,q})) = p + 2q$,
	for each $p \in \{0, \dots, t\}$ and $q \in \{0, \dots, k\}$.
\end{theorem}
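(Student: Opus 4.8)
The plan is to establish the chain of inequalities
$p + 2q \leq \chi_{sym}(H_{p,q}, \sigma_{p,q}) \leq \chi_{sym}^p(H_{p,q}, \sigma_{p,q}) \leq p + 2q$,
which forces all three quantities to coincide. The rightmost inequality is immediate: restricting the given minimal coloring $c$ to $V(H_{p,q}) = I_p \cup S_q$ uses precisely the $p$ self-inverse colors $0_{i_1}, \dots, 0_{i_p}$ and the $q$ non-self-inverse pairs $\pm s_{i_1}, \dots, \pm s_{i_q}$, so it is a proper $S_{2q}^p$-coloring of $(H_{p,q}, \sigma_{p,q})$ and witnesses $\chi_{sym}^p(H_{p,q}, \sigma_{p,q}) \leq p + 2q$. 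The middle inequality holds by definition, since the symset chromatic number is the minimum of $\chi_{sym}^{t'}$ over all admissible $t'$.

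The crux is the lower bound $\chi_{sym}(H_{p,q}, \sigma_{p,q}) \geq p + 2q$, which I would prove by contradiction through a swap argument exploiting the minimality of $c$. Suppose $(H_{p,q}, \sigma_{p,q})$ admits a proper coloring $c''$ using a symmetric set $S_{2k'}^{t'}$ with $t' + 2k' = \chi_{sym}(H_{p,q}, \sigma_{p,q}) < p + 2q$. After relabeling I may assume the color set $S''$ of $c''$ is disjoint from all colors used by $c$ on the complement $W := V(G) \setminus (I_p \cup S_q)$, which are exactly the remaining $t - p$ self-inverse colors and $k - q$ non-self-inverse pairs. Define a coloring $\tilde c$ of $(G,\sigma)$ by $\tilde c = c''$ on $H_{p,q}$ and $\tilde c = c$ on $W$.

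I then claim $\tilde c$ is proper. Edges inside $H_{p,q}$ are handled by $c''$ and edges inside $W$ by $c$. For a crossing edge $e = vw$ with $v \in H_{p,q}$ and $w \in W$, the color $\tilde c(v) \in S''$ is fresh while $\sigma(e)\tilde c(w)$ equals $\tilde c(w)$ or its negative, both of which are again colors used on $W$ (that set being symmetric); hence $\tilde c(v) \neq \sigma(e)\tilde c(w)$. The symmetric set underlying $\tilde c$ has $t' + (t-p)$ self-inverse elements and $k' + (k-q)$ non-self-inverse pairs, so its size is $(t' + 2k') + (t - p) + 2(k - q) < (p + 2q) + (t - p) + 2(k - q) = t + 2k = \lambda_t$. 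Since $\lambda_t = \chi_{sym}(G,\sigma) \leq \chi(G)$ by Proposition~\ref{sym_upper_bound}, the number of self-inverse colors of $\tilde c$ stays below $\chi(G)$, so $\tilde c$ is a proper symmetric-set coloring of $(G,\sigma)$ with fewer than $\lambda_t$ colors, contradicting $\chi_{sym}(G,\sigma) = \lambda_t$. Thus $\chi_{sym}(H_{p,q}, \sigma_{p,q}) \geq p + 2q$, and the three equalities follow.

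I expect the only genuinely delicate point to be verifying the crossing-edge condition cleanly, namely recording that the negatives of the old colors on $W$ are again old colors, so a fresh color can never collide with $\sigma(e)$ times an old color regardless of the sign of $e$. The color-counting and the reduction to a contradiction with minimality are then routine. Note that the argument uses only that $c$ attains $\chi_{sym}(G,\sigma) = \lambda_t$; the maximality of $t$ is not needed here.
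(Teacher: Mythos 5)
Your proposal is correct and takes essentially the same route as the paper: restrict the minimal coloring $c$ to get the upper bound, then swap a hypothetically better coloring of $(H_{p,q},\sigma_{p,q})$ (on a color set disjoint from the colors $c$ uses outside $H_{p,q}$) back into $(G,\sigma)$ to contradict $\chi_{sym}(G,\sigma)=\lambda_t$; your write-up just makes explicit the crossing-edge check and the admissibility bound on the number of self-inverse colors that the paper's one-line argument leaves implicit. The only substantive difference is that the paper's proof additionally excludes an optimal coloring of $H_{p,q}$ of the same total size but with more self-inverse colors, which does use the maximality of $t$; this extra case is irrelevant to the numerical equalities you prove and matters only for the tie-breaking (``$t$ maximum'') convention built into the definition of $\chi_{sym}$, so your observation that maximality of $t$ is not needed is accurate for the statement as written.
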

\begin{proof}
	By the coloring $c$ of $(G,\sigma)$ we have that 
	$\chi_{sym}(H_{p,q}, \sigma_{p,q})) \leq p + 2q$. However, if there would be 
	a better coloring with less colors or one with the same number of colors but more
	self-inverse colors, then there would be a better coloring for $(G,\sigma)$,
	a contradiction. 
\end{proof}

For $(p,q)=(t,0)$ and $(p,q)=(0,k)$ we obtain the following corollary. 

\begin{corollary}
	Let $(G,\sigma)$ be a signed graph with $\chi_{sym}(G,\sigma)= \lambda_t = t+2k$ and let $c$ be a $\lambda_t$-coloring of $(G,\sigma)$. Then $(G,\sigma)$ can be partitioned into two induced subgraph $(H_1,\sigma_1)$ and 
	$(H_2,\sigma_2)$, such that $\chi_{sym}(H_1,\sigma_1) = t = \chi(H_1)$ and
	$\chi_{sym}(H_2,\sigma_2) = 2k = \chi_{sym}^0(H_2,\sigma_2)$.
\end{corollary}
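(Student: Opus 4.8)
The plan is to read the corollary as the two-point specialization of the preceding theorem that the sentence before it already announces, namely $(p,q)=(t,0)$ and $(p,q)=(0,k)$, and then to supply the single extra equality $\chi(H_1)=t$ that is not literally contained in that theorem.

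First I would record the partition. Since the $\lambda_t$-coloring $c$ assigns to every vertex of $G$ either one of the $t$ self-inverse colors or one of the $k$ non-self-inverse colors, the color classes partition $V(G)$ into the disjoint union $I_t \cup S_k$, where $I_t$ is the union of all self-inverse color classes and $S_k$ the union of all non-self-inverse color classes. Setting $(H_1,\sigma_1) := (H_{t,0},\sigma_{t,0}) = (G[I_t],\sigma_{|_{G[I_t]}})$ and $(H_2,\sigma_2) := (H_{0,k},\sigma_{0,k}) = (G[S_k],\sigma_{|_{G[S_k]}})$, these two induced subgraphs partition $(G,\sigma)$, which is exactly the decomposition claimed.

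Next, for $H_2$ I would simply invoke the preceding theorem with $(p,q)=(0,k)$, which yields $\chi_{sym}(H_2,\sigma_2) = \chi_{sym}^0(H_2,\sigma_2) = 2k$ with no further work. For $H_1$ the same theorem with $(p,q)=(t,0)$ gives $\chi_{sym}(H_1,\sigma_1) = \chi_{sym}^t(H_1,\sigma_1) = t$, so it only remains to show $\chi(H_1)=t$. The restriction $c|_{I_t}$ uses only the $t$ self-inverse colors, and by Proposition \ref{structure} each self-inverse color class is independent in $G$; hence $c|_{I_t}$ is an ordinary proper $t$-coloring of the underlying graph $H_1$, giving $\chi(H_1)\le t$. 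On the other hand, Proposition \ref{sym_upper_bound} gives $\chi_{sym}(H_1,\sigma_1)\le\chi(H_1)$, so $t=\chi_{sym}(H_1,\sigma_1)\le\chi(H_1)\le t$ and therefore $\chi(H_1)=t$.

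I do not expect a genuine obstacle: the substantive content is carried entirely by the preceding theorem, and the only step requiring any care is pinning down $\chi(H_1)=t$, which is a two-sided bound combining the trivial upper bound from Proposition \ref{sym_upper_bound} with the observation that a coloring using only self-inverse colors is nothing but an ordinary proper coloring of the underlying graph.
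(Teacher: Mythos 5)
Your proof is correct and follows exactly the paper's route: the paper derives the corollary purely as the specialization $(p,q)=(t,0)$ and $(p,q)=(0,k)$ of the preceding theorem, which is precisely your argument. You in fact go slightly beyond the paper by explicitly justifying the extra equality $\chi(H_1)=t$ (via the two-sided bound using Proposition \ref{structure} and Proposition \ref{sym_upper_bound}), a detail the paper leaves implicit.
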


We conclude with the following statement.

\begin{theorem}
	Let $(G,\sigma)$ be a signed graph. 
	Then $\chi_{sym}(G,\sigma)= \lambda_t = t+2k$ if and only if 
	$(G,\sigma)$ can be partitioned into $t$ independent sets and $k$ non-bipartite antibalanced subgraphs such that $t+2k$ is minimum and
	$t$ maximum for such a partition.  
\end{theorem}

\begin{proof}
	Clearly, each self-inverse color class induces an independent set in $(G,\sigma)$ and
	each non-self-inverse color class an antibalanced subgraph $(H,\gamma)$.
	Since $t$ is maximum it follows that $(H,\gamma)$ is not bipartite.
	
	On the other side, assume that $(G,\sigma)$ has a partition into $t'$ independent sets
	and $k'$ antibalanced subgraphs with $t'+2k'$ minimum.
	Then there is a partition $P$  with the maximum number of independent sets.
	Let $t$ be this number. Then $P$ has $k = \frac{1}{2}(t'+2k'-t)$
	non-bipartite antibalanced subgraphs.  Thus, $(G,\sigma)$ has an $S^t_{2k}$-coloring and therefore, 
	$\chi_{sym}(G,\sigma)= t+2k$.
\end{proof}

\subsubsection*{Upper Bounds for the Symset Chromatic Number}

By definition, $\chi_{sym}(G,\sigma) \leq \chi(G)$ and, therefore, 
Brooks' Theorem can easily be extended to the symset chromatic number.
Indeed, if $\chi_{sym}(G,\sigma) \not = \chi(G)$, then 
$\chi_{sym}(G,\sigma) \leq \Delta(G)-1$ unless $G$ is complete or an odd circuit. 
This statement can be further improved if there is a small non-self-inverse color class.

\begin{theorem} \label{Thm:small_Delta_bound}
Let $(G, \sigma)$ be a signed graph and $\chi _{sym}(G, \sigma)= \lambda_t = t + 2k < \chi(G)$. If there exists a $\lambda_t$-coloring with a non-self-inverse color class of cardinality 3, then $\chi_{sym}(G, \sigma) \leq \Delta (G) - k +1$.
\end{theorem}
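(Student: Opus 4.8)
The plan is to rephrase the inequality $\chi_{sym}(G,\sigma)\le \Delta(G)-k+1$ as a lower bound on the maximum degree. Since $\chi_{sym}(G,\sigma)=t+2k$ is a fixed number, the claim is equivalent to $\Delta(G)\ge t+3k-1$. First I would normalise the distinguished class: by Proposition~\ref{equivalent} I may switch so that the non-self-inverse colour class $T=\{a,b,c\}$ of cardinality $3$ is all-negative. Because the coloring is minimal, $t$ is maximal among the minimisers, so by Proposition~\ref{structure} (and the remark preceding Theorem~\ref{structure_sign}) every non-self-inverse class is non-bipartite; hence $T$ induces a triangle and is coloured monochromatically by a single non-self-inverse colour $s$ (assigning $s$ to all three vertices is proper, since $s\neq -s$ on the all-negative edges). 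I would also dispose of the easy regimes at the outset: the extended Brooks bound quoted before the theorem already gives $\chi_{sym}(G,\sigma)\le \Delta(G)-1$ when $G$ is neither complete nor an odd circuit, which settles $k\le 2$; the complete case (whose extremal instance is $K_{3k}$, where $\Delta=3k-1=t+3k-1$ with $t=0$) I would handle through Proposition~\ref{CompleteBrooksEven}. Thus the real content lies in $k\ge 3$ with $G$ non-complete.

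Next I would extract a baseline degree bound from criticality. By Theorem~\ref{critical_subgraphs} the graph contains a $\lambda_t$-critical subgraph on which, for each vertex $v$, deleting $v$ leaves an $S^t_{2(k-1)}$-colourable graph (Lemma~\ref{criticalGraph}); since such a colouring cannot be extended to $v$ (otherwise $\chi^t_{sym}(G,\sigma)\le t+2(k-1)$), every one of the $t+2(k-1)$ colours is forbidden at $v$, and as each neighbour forbids at most one colour this yields $d(v)\ge t+2k-2$. This recovers only $\Delta\ge \chi_{sym}(G,\sigma)-2$, so the whole difficulty is to gain the further $k+1$.

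The gain should come from the monochromatic triangle together with the $k-1$ remaining non-self-inverse classes. The key local feature is that at a triangle vertex, say $a$, its two triangle-neighbours $b,c$ both forbid the \emph{same} colour $\sigma(ab)s=\sigma(ac)s=-s$; this is exactly the Brooks-type saving, and it is available only because $T$ is unbalanced. I would combine it with the non-reducibility of the minimal coloring: the pair $\pm s$ cannot be eliminated, i.e.\ $T$ is not list-colourable from $S^t_{2(k-1)}$ when each $x\in T$ receives the list of colours not blocked by its neighbours outside $T$. A short greedy check shows that an all-negative triangle is list-colourable as soon as all three lists have size at least $3$ (in particular whenever some non-self-inverse colour lies in all three lists), so non-colourability forces these lists to be small and the associated blocking sets—hence the outside-degrees of $a,b,c$—to be large. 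Running this obstruction not only against the pair $\pm s$ but simultaneously against each of the other $k-1$ non-self-inverse classes (each itself non-bipartite and not mergeable away) should force $k-1$ further distinct blocked colours at a triangle vertex, upgrading $d\ge t+2k-2$ to $d\ge t+3k-1$.

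The main obstacle is precisely this amplification: converting the global statement ``no pair can be eliminated'' into a quantitative, per-vertex gain of $k+1$ in the degree, while keeping the triangle present after the passage to a critical subgraph. I expect the cleanest route is to argue on the given minimal coloring directly, so that the monochromatic triangle and the $k-1$ other odd classes are literally available, using the $k$ pairwise vertex-disjoint unbalanced odd circuits produced by Theorem~\ref{structure_sign} and charging to one chosen triangle vertex two neighbours from its own class together with at least one additional blocked colour from each of the remaining $k-1$ classes. Verifying that these charges land on distinct neighbours and involve no double counting is the delicate point; the tight example $K_{3k}$, where the count $t+3k-1$ cannot be improved, should serve as a guide for making each charge sharp.
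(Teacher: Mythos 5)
Your skeleton matches the paper's (reformulate the claim as $\Delta(G)\ge t+3k-1$, normalize $T$ to an all-negative monochromatic triangle, then force high degree at a triangle vertex by counting edges from $T$ to the other color classes and averaging), but the central ``amplification'' step has a genuine gap, and in the form you propose it cannot work. Your obstruction is non-recolorability of $T$ \emph{alone}: each $x\in T$ gets the list of colors of $S^t_{2(k-1)}$ not blocked by its already-colored neighbors outside $T$, and you hope that non-list-colorability of $T$ forces many blocked colors, hence high degree. The problem is that this obstruction is already compatible with the baseline degrees and forces nothing extra. By minimality, a triangle vertex $x$ has one neighbor in each self-inverse class and a positive and a negative edge to each other non-self-inverse class, i.e.\ exactly $t+2(k-1)$ neighbors outside $T$; these baseline edges alone can block \emph{all} $t+2(k-1)$ available colors (the neighbor in the $j$-th self-inverse class blocks $0_j$, and the positive and negative edges to $S_j$ block $s_j$ and $-s_j$). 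So all three lists can be empty while every vertex of $T$ has degree exactly $t+2k$; counting blocked colors at $T$ can therefore never push $\Delta(G)$ beyond $t+2k$, which only yields $\chi_{sym}(G,\sigma)\le\Delta(G)$, not $\Delta(G)-k+1$.

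The paper gets the missing gain from a strictly stronger, \emph{joint} recoloring obstruction: for each other non-self-inverse class $S$ it proves the Claim $d_{G[S\cup T]}(T)\ge 9$ (at least nine $T$--$S$ edges, versus the baseline six), and the contradiction there is an $S^2_2$-coloring of the whole union $G[T\cup S]$ -- both classes recolored simultaneously with two self-inverse colors and one non-self-inverse pair -- which keeps the total at $t+2k$ but increases $t$, contradicting the maximality of $t$ in the definition of $\chi_{sym}$. Establishing this bound is the real work (the case analysis with balanced/antibalanced triangles and the $H_5$ configuration); nothing of that kind appears in your sketch, and you yourself only say the counting ``should force'' the gain. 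With the Claim in hand, $d_G(T)\ge 3t+9(k-1)$, and averaging over the three vertices gives some $v\in T$ with $d_G(v)\ge t+3(k-1)+2=t+3k-1$. Two smaller points: passing to a $\lambda_t$-critical subgraph concerns $\chi^t_{sym}$ rather than $\chi_{sym}$ and may destroy the triangle, which is why the paper instead fixes an extremal coloring of $G$ itself (maximum vertices in self-inverse classes, then lexicographically largest non-self-inverse classes); and for complete graphs Proposition~\ref{CompleteBrooksEven} is not the right tool, though that case is trivial, since each non-self-inverse class spans at least three vertices, so $n\ge t+3k$ and $\Delta(K_n)=n-1\ge t+3k-1$.
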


\begin{proof} Among all $\lambda_t$-colorings of $(G,\sigma)$ which have 
	a non-self-inverse color class with precisely three vertices choose 
	coloring $c$ with a maximum number of vertices in the
	union of the self-inverse color classes and then choose the non-self-inverse 
	$S_1, \dots, S_k$, such that $|S_1|$ is maximum, according to the choice of
	$S_1, \dots, S_i$ choose $S_{i+1}$ such that 
	$|S_{i+1}|$ is maximum.
	Since every non-self-inverse color class has at least three vertices we can assume
	that $S_k = T$. 

	Clearly, $G[T]$ is a triangle. 
	Note that by the choice of $c$ every vertex of $T$ is connected to 
	each self-inverse color class by an edge and to each non-self-inverse color class by a positive and a negative edge. 
	It implies that each vertex $v \in T$ has $d_G(v) \geq t+2k$.
	
	We will show that there is
	a vertex $v \in T$ with $d_G(v) \geq t+3k -1$.

	Let $S=c^{-1}(\pm s)$ be a non-self-inverse color class. Then 
	$c$ induces an $S^0_4$-coloring of 
	$(G[S \cup T], \sigma_{|_{G[S \cup T]}})$, and we
	can assume that all edges of $G[S]$ and $G[T]$ are negative. 
	Furthermore, each vertex
	of $T$ has degree at least 4 in $G[T \cup S]$ and $d_{G[S \cup T]}(T) \geq 6$.
	We will show that there are at least nine edges between $T$ and $S$.
	Let $V(T)=\{v_1,v_2,v_3\}$ and we assume that $d_{G[T\cup S]}(v_1) \leq  d_{G[T\cup S]}(v_2) \leq d_{G[T\cup S]}(v_3)$.

\begin{claim}
$d_{G[S \cup T]}(T) \geq 9$.
\end{claim}

Proof of the claim. 
Suppose to the contrary that the claim is not true. Then $d_{G[T\cup S]}(v_1) = 4$
and $ 8 \leq d_{G[T\cup S]}(v_2) + d_{G[T\cup S]}(v_3) \leq 10$. Thus,
$d_{G[T\cup S]}(v_2) \leq 5$. Let $\{w_1,w_2\}$ be the neighbors of $v_1$ in $S$. We assume that
$v_1w_1$ is negative and $v_1w_2$ is positive. 

Suppose that $w_2$ is not a neighbor of $v_i$, $i \in \{2,3\}$. Then $w_2$ and $v_i$ can be colored with one self-inverse color, $v_j$ ($j \not= 1,i$) can be colored with 
another self-inverse color and $v_1$ with color $s$, since it is connected by a negative edge to its second neighbor in $S$. 
Thus, $w_2$ is also neighbor of $v_2$ and $v_3$. 
By switching at $T$ we deduce that $w_1,w_2$ are both neighbors of $v_2$ and of $v_3$.

Let $d_{G[T\cup S]}(v_2) = 5$. Hence, $d_{G[T\cup S]}(v_3) = 5$. Let $w_3$ be the third
neighbor of
$v_2$ in $S$. By possible switching at $T$ we can assume that $v_2w_3$ is negative.
If $G[\{v_3,w_1,w_2\}]$ is bipartite, then color it with two (new) self-inverse colors 
and $v_1, v_2$ with color $s$ to obtain an $S^2_2$-coloring of $G[T\cup S]$, a contradiction.

Thus, $w_1w_2 \in E(G)$ (indeed in $E_{\sigma}^-$) and $G[\{v_3,w_1,w_2\}]$ is a triangle. 
Furthermore, $G[\{v_1,w_1,w_2\}]$ is a balanced triangle. Suppose that  
$G[\{v_2,w_1,w_2\}]$ is anti-balanced, then $v_2$ can be colored with $\pm s$ and
$v_1, v_3$ with two self-inverse colors to obtain an $S^2_2$-coloring of $G[T\cup S]$, a contradiction. By possible switching we analogously argue for $G[\{v_3,w_1,w_2\}]$
and hence, $G[\{v_i,w_1,w_2\}]$ is a balanced triangle for each $i \in \{1,2,3\}$.
Since $w_1w_2$ is negative, precisely one of the remaining two edges is positive. 
If one of $w_1,w_2$, say $w_1$ is incident to three positive edges $v_1w_1, v_2w_1, v_3w_1$,
then color $w_1$ and $w_4$ -the third neighbor of $v_3$ in $S$- with two self-inverse colors
and the remaining vertices with color $s$ to obtain an $S^2_2$-coloring of $G[T\cup S]$, a contradiction. 

Hence, $G[T \cup \{w_1,w_2\}]$ is a complete signed subgraph $(H_5,\sigma_5)$ of $(G,\sigma)$.
Clearly, all edges within two vertices
of $T$ are negative and all edges between two vertices of $S$ are negative.
We will discuss the following distribution of positive and negative edges:
$v_1w_2, v_2w_2, v_3w_1$  are positive and all other edges in $(H_5,\sigma_5)$ are negative. Furthermore,
we can assume that $v_2w_3$ is negative (see Figure \ref{fig:H_5}). The argumentation for other distributions is similar.  

If $w_3 = w_4$ and $v_3w_3$ is negative or $w_3 \not = w_4$,
then color $v_3$ and $w_2$ with two self-inverse colors and the remaining vertices with color $s$ to obtain the desired contradiction. 

(*) If $w_3 = w_4$ and $v_3w_3$ is positive, then color $v_3, w_2$ with two self-inverse colors 
and the remaining vertices with color $s$ to obtain an $S^2_2$-coloring of $G[T\cup S]$, 
which is the desired contradiction and finishes the proof of this case.  

It remains to consider the case when $d_{G[T \cup S]}(v_2) = 4$.
We analogously deduce that $(G,\sigma)$ contains $(H_5,\sigma_5)$. 
If $d_{G[T \cup S]}(v_3) = 4$, then $w_1,w_2$ is a bipartite cut in $G[T \cup S]$
and we easily get an $S^2_2$-coloring of $G[T \cup S]$. If 
$d_{G[T \cup S]}(v_3) = 5$, we similarly argue as above by discussing the edge $v_3w_3$
instead of $v_2w_3$. If $d_{G[T \cup S]}(v_3) = 6$, then we may assume that
$v_3w_3$ is negative. However, the coloring given in (*) works here as well and the proof of the claim is finished.

\begin{figure}[h]
	\centering
	\includegraphics[width=0.4\linewidth]{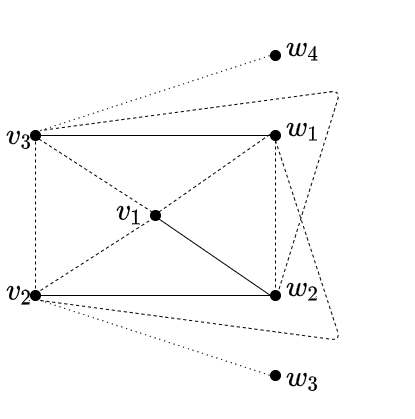}
	\caption[fig:H_5]{The graph $(G[T \cup S], \sigma|_{G[T \cup S]})$, with dotted edges negative and small dotted edges undefined. }
	\label{fig:H_5}
	\end{figure}

Since $(G[T], \sigma_{|_{G[T]}})$ is connected to $t$ self-inverse color classes and $k-1$ non-self-inverse color classes, it holds that $d_G(T) \geq 3t + 9(k-1)$. 
Seeing that $T$ only contains three vertices, each of degree 2 in $G[T]$, it follows that there exists $v \in T$ such that $d_G(v) \geq t + 3(k-1) +2 = t+3k -1$.

\end{proof}

\section{Concluding Remarks on Variants of Coloring Parameters of Signed Graphs} 

\subsection{Circular Coloring} \label{Sec: Variants}

Circular coloring is a well studied refinement of ordinary coloring of graphs.
Here the set of colors is provided with a (circular) metric.
Kang and Steffen \cite{Circular} used elements of cyclic groups as colors for their definition 
of $(k, d)$-coloring of a signed graph $(G,\sigma)$. 
For positive integers $k, d$ with $k \geq 2d$, a $(k,d)${\it -coloring}
of a signed graph $(G, \sigma)$ is a map $c: V(G) \rightarrow \mathbb{Z}_k$ such that for each edge $e=vw$, $|c(v)-\sigma (e) c(w)| \geq d \mod k$.
Hence, this coloring is a specific $S^1_{2k'}$-coloring if $k=2k'+1$, and a specific $S^2_{2(k'-1)}$-coloring if $k=2k'$. 

Naserasr, Wang and Zhu \cite{NWZ_2020} generalized circular coloring 
of graphs to signed graphs as follows. For 
$i,j \in \{0,1, \dots,p-1\}$, the modulo-$p$ distance between $i$ and $j$
is $d_{(\bmod p)}(i,j) = \min\{|i-j|, p-|i-j|\}$. For an even integer $p$,
the antipodal color of $x \in \{0,1,\dots,p-1\}$ is 
$\overline{x} = x + \frac{p}{2} \mod p$.

Let $p$ be an even integer and $q \leq \frac{p}{2}$ be a positive integer. 
A $(p,q)$-coloring of a signed graph $(G,\sigma)$   is a 
mapping $f \colon V(G) \rightarrow \{0,1, \dots,p-1\}$ such that for each positive edge $xy$,
$d_{(\bmod p)}(f(x),f(y)) \geq q$, and for each negative edge $xy$,
$d_{(\bmod p)}(f(x),\overline{f(y)}) \geq q$. 
Now it is easy to see that this defines a specific $S^0_p$-coloring of $(G,\sigma)$.

\subsection{DP-Coloring} \label{Sec: Variants}

In this subsection, we show that coloring of signed graphs with
elements from a symmetric set can be described as special 
$DP$-coloring. The $DP$-coloring was introduced
for graphs by Dvo\v{r}\'{a}k and Postle \cite{DP_Coloring_Def} under the name correspondence coloring. We follow 
Bernshte\v{\i}n, Kostochka, and Pron \cite{BKP_2017} 
and consider multigraphs. 

Let $G$ be a multigraph. A \emph{cover} of $G$ is a pair $(L,H)$, where $L$ is an assignment 
of pairwise disjoint sets to the vertices of $G$ and $H$ is the graph with vertex set
$\bigcup_{v \in V(G)} L(v)$ satisfying the following conditions:

\begin{enumerate}
	\item $H[L(v)]$ is an independent set for each $v \in V(G)$.
	\item For any two distinct vertices $v, w$ of $G$ the set of edges between $L(v)$ and
	$L(w)$ is the union of $\mu_G(v,w)$ (possible empty) matchings, where $\mu_G(v,w)$
	denotes the number of edges between $v$ and $w$ in $G$. 
\end{enumerate}

An \emph{ $(L,H)$-coloring} of $G$ is an 
independent transversal $T$ of cardinality $|V(G)|$ in $H$, i.e.~for each vertex $v \in V(G)$ exactly one vertex of $L(v)$ belongs to $T$ and $H[T]$ is edgeless. We also say that
$G$ is $(L,H)$\emph{-colorable}. 

Let $t,k$ be positive integers and $L^t_{2k} = \{s_1,\dots,s_t, r_0, \dots, r_{2k-1}\}$.
An \emph{ $(L,H^t_{2k})$-cover} of a signed multigraph is a cover of $(G,\sigma)$ with 
$L(v) = L^t_{2k}$ for each vertex $v \in V(G)$ and $H^t_{2k}$ satisfies the following conditions: 

\begin{enumerate}
	\item $H^t_{2k}[L(v)]$ is an independent set for each $v \in V(G)$.
	\item If there is no edge between $u$ and $w$, then $E_{H^t_{2k}}(L(u),L(w)) = \emptyset$. 
	\item For each edge $e$ between $u$ and $w$ we associate a perfect matching $M_e$ of $E_{H^t_{2k}}(L(u),L(w))$ with the property that, if $e$ is a positive edge, then 
	$M_e = \{(q,u)(q,w) \colon q \in L^t_{2k}\}$ and if 
	$e$ is a negative edge, then $M_e$ is a perfect matching 
	of $E_{H^t_{2k}}(L(u),L(w))$ which consists of the edges 
	$(s_i,u)(s_i,w)$ for each $i \in \{1,\dots,t\}$ and
	$(r_j,u)(r_{j+k},w)$ for each $j \in \{0,\dots,2k-1\}$, where the indices are added mod $2k$.  
\end{enumerate}

It is easy to see that a signed graph is
$S^t_{2k}$-colorable if and only if it 
$(L,H^t_{2k})$-colorable.
The associated chromatic numbers are to be defined accordingly. 

If we consider coloring of signed graphs we can restrict to 
multigraphs with edge multiplicity at most 2, 
since more than one 
positive and one negative edge between two vertices do 
not have any effect on the coloring properties of the multigraph. 
That is, if we consider $(L,H^t_{2k})$-cover of a
signed extension $\pm G$ of a graph $G$, then $H^t_{2k}[E_H(L(u),L(w)]$ 
is a 2-regular 
multigraph whose components are digons and circuits of lengths 4,
for any two adjacent vertices $u, v$ of $G$.

However, the DP-coloring approach allows further flexibility and
generalizations. For instance, 
DP-coloring is considered in the more general context of gain graphs in a short note of Slilaty \cite{Slilaty_21},
where the corresponding chromatic polynomials are defined. 

\subsubsection*{Statements}
On behalf of all authors, the corresponding author states that there is no conflict of interest. 
The manuscript has no associated data.

\bibliography{Survey_References}{}
\addcontentsline{toc}{section}{References}
\bibliographystyle{abbrv}

\end{document}